\documentclass{arxiv}

\usepackage{amssymb}
\usepackage{algorithm}
\usepackage{float}
\usepackage{needspace}
\usepackage{algorithmic}
\usepackage{subcaption}
\usepackage{xcolor}
\usepackage{enumerate}

\usepackage{amsfonts}
\usepackage{graphicx}
\usepackage{epstopdf}
\usepackage{algorithm}
\usepackage{algorithmic}
\ifpdf
  \DeclareGraphicsExtensions{.eps,.pdf,.png,.jpg}
\else
  \DeclareGraphicsExtensions{.eps}
\fi

\AtBeginDocument{}

\newsiamremark{remark}{Remark}
\newsiamremark{hypothesis}{Hypothesis}
\crefname{hypothesis}{Hypothesis}{Hypotheses}
\newsiamthm{claim}{Claim}
\newsiamremark{fact}{Fact}
\crefname{fact}{Fact}{Facts}

\headers{Adaptive projected-gradient SAA for stochastic MOO}{Y. Li, L. Wang, and X. Chen}

\title{An Adaptive Projected-Gradient Algorithm for Sample-Average Approximations of Stochastic Multi-Objective Optimization}

\author{%
  Yiyang Li\thanks{yiyang-amber.li@connect.polyu.hk}
  \qquad
  Lei Wang\thanks{wlkings@lsec.cc.ac.cn}
  \qquad
  Xiaojun Chen\thanks{maxjchen@polyu.edu.hk}
}

\affiliation{%
  Department of Applied Mathematics,
  The Hong Kong Polytechnic University,
  Hong Kong, China
}

\date{}

\usepackage{amsopn}

\ifpdf
\hypersetup{
  pdftitle={An Adaptive Projected-Gradient Algorithm for Sample-Average Approximations of Stochastic Multi-Objective Optimization},
  pdfauthor={Yiyang Li}
}
\fi
\newtheorem{assumption}{Assumption}

\newcommand{\st}{\mathrm{s.\,t.}}

\begin{document}

\maketitle

\begin{abstract}
We consider stochastic multi-objective optimization over a nonempty closed convex set, where every objective is an expectation and only sample-gradient information is available.
We develop a line-search-free and function-value-free adaptive projected-gradient algorithm for the sample-average approximation (SAA) problem.
Each iteration computes a feasible regularized multi-gradient step and updates the regularization parameter from the projected step length. 
A normal-cone-based certificate yields descent estimates and an explicit complexity bound for the Pareto-stationarity residual of the SAA problem. 
The consistency of SAA gradients then transfers vanishing SAA residuals to Pareto stationarity for the population problem, while an additional concentration argument gives a finite-sample residual bound on compact sets.
Experiments on synthetic problems, classification, portfolio selection, multi-task learning, and robot control illustrate the practical performance of our algorithm. 
\end{abstract}


%
%
%
%
%
%

\begin{keywords}
multiple objective programming \sep stochastic optimization \sep sample-average approximation \sep Pareto stationarity \sep multi-gradient method.
\end{keywords}

\section{Introduction}

\label{sec:introduction}

Many learning and decision-making tasks involve competing objectives, including predictive accuracy versus model complexity, fit versus robustness to distributional shifts, and expected return versus financial risk.
A multi-objective formulation \citep{ehrgott2026fifty} represents these trade-offs directly and seeks solutions that are balanced in the sense of Pareto optimality. 
In practice, the objectives are usually stochastic since they depend on an underlying data-generating distribution that can be accessed only through simulation or repeated observations. 

Let $\mathcal X\subseteq\mathbb R^n$ be a nonempty, closed, and convex set. 
We consider the stochastic multi-objective optimization (SMOO) problem
\begin{equation}
	\label{pb:multi}
	\min_{\mathbf z\in\mathcal X} \; F(\mathbf z):=\big(F_1(\mathbf z),\ldots,F_m(\mathbf z)\big)^\top.
\end{equation}
For each $i\in[m]:=\{1,\ldots,m\}$, the function $F_i$ is defined as
\begin{equation*}
	F_i(\mathbf z):=\mathbb E[f_i(\mathbf z,\boldsymbol\xi)],
\end{equation*}
where $\boldsymbol\xi:\Omega\to\Xi\subseteq\mathbb R^\ell$ is a random vector defined on a probability space $(\Omega,\mathcal F,\mathbb P)$. 
For convenience, we refer to problem~\eqref{pb:multi} as the population problem. 
Let $\{\xi_j\}_{j\ge1}$ be an i.i.d.\ sequence with the same distribution as $\boldsymbol\xi$.
For a fixed sample size $N$, we define
\begin{equation}
	\label{pb:saa_mul}
	F_i^N(\mathbf z):=
	\frac1N\sum_{j=1}^N f_i(\mathbf z,\xi_j).
\end{equation}
Throughout this paper, we make the following blanket assumptions. 

\begin{assumption}
	\label{ass:standing}
	The following conditions hold.
	\begin{enumerate}[(i)]
		\item For each $i\in[m]$ and each $\mathbf z\in\mathcal X$, $f_i:\mathcal X \times \Xi \to \mathbb R$ is measurable in $\xi$.
		
		\item For each $i\in[m]$ and for $\mathbb P$-a.e. $\xi\in\Xi$, the mapping $\mathbf z\mapsto f_i(\mathbf z,\xi)$ is continuously differentiable on an open set containing $\mathcal X$, and has $L_i(\xi)$-Lipschitz continuous gradient on $\mathcal X$, where $L_i(\xi)\ge0$ is measurable and
		$
		\mathbb E[L_i(\boldsymbol\xi)]<\infty.
		$
		
		\item For every nonempty compact set $S\subseteq\mathcal X$ and each $i\in[m]$, there exist integrable functions $K_{i,S},M_{i,S}:\Xi\to[0,\infty)$ such that, for $\mathbb P$-a.e. $\xi$,
		\begin{equation*}
			|f_i(\mathbf z,\xi)|\le K_{i,S}(\xi),
			\qquad
			\|\nabla f_i(\mathbf z,\xi)\|\le M_{i,S}(\xi),
			\qquad
			\forall\mathbf z\in S.
		\end{equation*}
		
		\item With probability one, for every $N\ge1$ and $i\in[m]$, each $F_i^N$ is bounded below and level-bounded on $\mathcal X$.
	\end{enumerate}
\end{assumption}

Assumption~\ref{ass:standing} implies that each $F_i$ has a Lipschitz continuous gradient on $\mathcal X$ with constant $L_i:=\mathbb E[L_i(\boldsymbol\xi)]$.
For every fixed $N$, the SAA objective $F_i^N$ is continuously differentiable on $\mathcal X$.
With probability one, its gradient has Lipschitz constant
\begin{equation*}
	L_i^N:=\frac1N\sum_{j=1}^N L_i(\xi_j),
	\qquad i\in[m].
\end{equation*}

Problem~\eqref{pb:multi} combines the difficulty of conflicting objectives with the absence of exact objective information.
Two broad strategies are commonly used.
The first reduces the vector problem to a stochastic single-objective problem through scalarization.
For example, the weighted-sum method selects $w  \in \Delta_m:=\{w\in\mathbb R^m: w\ge0, \mathbf 1^\top w=1\}$ and solves
\begin{equation*}
	\min_{\mathbf z\in\mathcal X}\; \phi_w(\mathbf z)
	:=\sum_{i=1}^m w_i\,F_i(\mathbf z).
\end{equation*}
The $\varepsilon$-constraint method instead fixes $\varepsilon_i\in\mathbb R$, $i=2,\ldots,m$, and solves
\begin{equation*}
	\min_{\mathbf z\in\mathcal X}\; F_1(\mathbf z)
	\quad\st\quad
	F_i(\mathbf z)\le\varepsilon_i,
	\quad i=2,\ldots,m.
\end{equation*}
Chebyshev-type scalarizations can additionally reach nonconvex regions of the Pareto front.
Approximating a broad portion of the Pareto set nevertheless requires solving many scalarized subproblems with different weights or thresholds. 
The second strategy replaces the expectations by sample averages and then applies a deterministic multi-objective method \citep{FliegeXu2011,shapiro2021lectures}.
For the i.i.d. sample $\{\xi_j\}_{j=1}^N$ in \eqref{pb:saa_mul}, the resulting sample-average approximation (SAA) problem is
\begin{equation}
	\label{pb:det_moo}
	\min_{\mathbf z\in\mathcal X} \; F^N(\mathbf z):=
	\big(F_1^N(\mathbf z),\ldots,F_m^N(\mathbf z)\big)^\top.
\end{equation}

For unconstrained deterministic multi-objective optimization, the multi-steepest descent method of Fliege and Svaiter \citep{Fliege2000} seeks a common descent direction at iterate $k$ by solving
\begin{equation}
	\label{subpb:smd}
	\mathbf d_k
	=\arg\min_{\mathbf d\in\mathbb R^n}
	\left\{\max_{i\in[m]}\nabla F_i^N(\mathbf z_k)^\top\mathbf d
	+\frac12\|\mathbf d\|^2\right\}.
\end{equation}
Introducing a slack variable $\delta$ converts the maximum term in subproblem~\eqref{subpb:smd} into linear constraints and yields the equivalent convex quadratic program
\begin{equation*}
	\min_{\delta\in\mathbb R,\,\mathbf d\in\mathbb R^n}
	\ \delta+\frac12\|\mathbf d\|^2
	\quad\st\quad
	\nabla F_i^N(\mathbf z_k)^\top\mathbf d\le\delta,
	\quad i=1,\ldots,m.
\end{equation*}
Its dual formulation is
\begin{equation}
	\lambda_k\in\arg\min_{\lambda\in\Delta_m}
	\frac12\left\|\sum_{i=1}^m\lambda_i\nabla F_i^N(\mathbf z_k)\right\|^2,
	\label{subpb:dual}
\end{equation}
and the associated descent direction is
\begin{equation*}
	\mathbf d_k=-\sum_{i=1}^m\lambda_{k,i}\nabla F_i^N(\mathbf z_k).
\end{equation*}
The direction $\mathbf d_k$ is therefore the negative minimum-norm convex combination of the objective gradients, which is the cornerstone of the multi-gradient descent algorithm (MGDA) \citep{MGDA}.

After computing $\mathbf d_k$, a standard update has the form $\mathbf z_{k+1}=\mathbf z_k+\alpha_k\mathbf d_k$ with the stepsize $\alpha_k$.
A backtracking Armijo condition adapted to vector-valued objectives \citep{steepestdescent} is often used to select $\alpha_k>0$  by requiring
\begin{equation}
	F^{N}(\mathbf{z}_k + \alpha_k \mathbf{d}_k) \preceq F^{N}(\mathbf{z}_k) + \gamma \alpha_k J_{F^{N}}(\mathbf{z}_k) \mathbf{d}_k,
	\label{armijo_line_search}
\end{equation}
where $\gamma\in(0,1)$ and
\begin{align*}
	J_{F^{N}} (\mathbf{z}) =
	\begin{bmatrix}
		[\nabla F^{N}_1(\mathbf{z})]^\top \\
		\vdots\\
		[\nabla F^{N}_m(\mathbf{z})]^\top
	\end{bmatrix}
	\in \mathbb{R}^{m \times n}.
\end{align*}
For nonconvex objectives, a monotone line search can be unnecessarily restrictive.
Nonmonotone variants permit temporary increases in selected components and use conditions of the form \citep{nonmonotone}
\begin{equation*}
	F^N(\mathbf z_k+\alpha_k\mathbf d_k)
	\preceq
	C^k+\gamma\alpha_kJ_{F^N}(\mathbf z_k)\mathbf d_k,
	\qquad C^k\succeq F^N(\mathbf z_k).
\end{equation*}
For example, a max-type method uses
\begin{equation*}
	C_i^k:=\max_{0\le j\le J(k)}F_i^N(\mathbf z_{k-j}),
	\qquad i\in[m],
\end{equation*}
where $J(k)$ is a dynamic window size, whereas an average-type method uses
\begin{equation*}
	C^k:=\frac{\eta q_{k-1}}{q_k}C^{k-1}+\frac1{q_k}F^N(\mathbf z_k),
	\qquad q_k:=\eta q_{k-1}+1.
\end{equation*}

The repeated objective evaluations required by line-search procedures are usually costly in stochastic settings.
Gratton, Jerad, and Toint \citep{Gratton2025SOFFAR} observe that, even for stochastic single-objective optimization, reliable function estimates may require substantially larger samples than gradient estimates.
Using the gradient mini-batch in a multi-objective Armijo test can therefore make the decision unstable, whereas increasing the batch size for function estimation raises the cost of every trial step.

Objective-function-free optimization (OFFO) methods avoid this difficulty by relying only on gradient information \citep{Gratton2025Complexity,Gratton2025SOFFAR,wang2026adaptive}.
In particular, OFFOAR1 \citep{Gratton2025SOFFAR} computes the update step $s_k$ by minimizing the regularized linear model
\begin{equation*}
	m_k(s):=\langle g_k,s\rangle+\frac{\sigma_k}{2}\|s\|^2,
\end{equation*}
where $g_k$ is a stochastic gradient estimator and $\sigma_k > 0$ is a regularization parameter. 
Then OFFOAR1 updates
\begin{equation*}
	\mathbf z_{k+1}=\mathbf z_k+s_k,
	\qquad
	\sigma_{k+1}=\sigma_k+\sigma_k\|s_k\|^2.
\end{equation*}
Under suitable conditions on the gradient estimator, OFFO-type methods attain optimal complexity.
They remain fully stochastic and require no function-value evaluations.

Motivated by this adaptive-regularization principle, we develop a projection-based method for constrained stochastic multi-objective optimization. 
This method computes each step from a regularized multi-gradient subproblem and updates its regularization parameter without a line-search procedure or a comparison of objective values.
The contributions of this paper can be summarized as follows. 
\begin{itemize}
	
	\item We introduce an adaptive projected-gradient method for the SAA formulation of constrained stochastic multi-objective optimization.
	Each iteration computes a feasible regularized multi-gradient step.
	The regularization parameter adapts to the step length, while the Tikhonov parameter regularizes the simplex multiplier and makes its selection unique and stable.
	
	\item We establish convergence and complexity results for the SAA problem.
	Under Assumption~\ref{ass:standing}, we prove that the iterates remain in a compact set and every accumulation point is Pareto stationary for problem~\eqref{pb:det_moo}. 
	The same analysis gives an explicit $\mathcal O(\varepsilon^{-2})$ complexity bound.
	
	\item We quantify the SAA approximation error.
	Compact-uniform convergence of the gradients implies consistency of the Pareto-stationarity residual and transfers suitably accurate SAA outputs to Pareto stationary points of the population problem.
	A covering-number argument further provides a high-probability finite-sample bound on compact sets.
	
	\item 
	Experiments on synthetic problems and real-world applications assess stationarity, feasibility, objective trade-offs, and computational cost of our algorithm.
	
\end{itemize}

The remainder of the paper is organized as follows. 
Section~\ref{sec:preliminaries} introduces notation and stationarity concepts.
Section~\ref{sec:A:gradients} develops the projected regularized multi-gradient algorithm, and Section~\ref{sec:C:complexity} presents its convergence analysis. 
Section~\ref{sec:D:consistency} studies consistency with the population problem and finite-sample residual bounds. 
Section~\ref{sec:experiment} reports the numerical experiments, followed by concluding remarks in the last section.

\section{Preliminaries}

\label{sec:preliminaries}

In this section, we introduce the basic notations and stationarity concepts used throughout this paper.

\subsection{Notations}

We write $\mathbb R$, $\mathbb R_+$, and $\mathbb R_{++}$ for the real, nonnegative real, and positive real numbers, respectively. 
The symbol $\|\cdot\|$ denotes the Euclidean norm for vectors, while $\|\cdot\|_F$ and $\|\cdot\|_2$ denote the Frobenius and spectral norms for matrices. 
For a nonempty closed convex set $C$, $\operatorname{Proj}_C$ is the Euclidean projection onto $C$. 
For $\mathbf z\in\mathcal X$, the normal cone to $\mathcal X$ at $\mathbf z$ is $\mathcal N_{\mathcal X}(\mathbf z) := \{v\in\mathbb R^n:\ v^\top(\mathbf y-\mathbf z) \leq 0, \forall\mathbf y\in\mathcal X\}$.

\subsection{Stationarity conditions}

We now recall the optimality and stationarity conditions for problems \eqref{pb:multi} and \eqref{pb:det_moo}.
For $u, v \in \mathbb R^m$, the relation $u \preceq v$ means that $u_j \leq v_j$ for every $j \in [m]$, while $u \prec v$ means that all these inequalities are strict.

\begin{definition}[{\citealp[p. 38]{ehrgott2005multicriteria}}]
	\label{def:pareto_op}
	A point $\mathbf z^\ast \in \mathcal{X}$ is called Pareto optimal for problem~\eqref{pb:multi} if there is no $\mathbf z \in \mathcal{X}$ such that
	\begin{equation*}
		F(\mathbf z) \preceq F(\mathbf z^\ast)
		\quad\text{and}\quad
		F(\mathbf z) \neq F(\mathbf z^\ast).
	\end{equation*}
\end{definition}

\begin{definition}[{\citealp[p. 19]{nonlinearMOO}}]
	\label{def:weak_pareto_op}
	A point $\mathbf z^\ast\in \mathcal{X}$ is called weakly Pareto optimal for problem~\eqref{pb:multi} if there is no $\mathbf z\in \mathcal{X}$ such that
	\begin{equation*}
		F(\mathbf z)\prec F(\mathbf z^\ast).
	\end{equation*}
\end{definition}

%

Clearly, every Pareto optimal point is weakly Pareto optimal, while the converse does not hold in general. 
For nonconvex multi-objective optimization problems, computing a Pareto optimal or weakly Pareto optimal point is generally challenging, since these notions require global comparisons over the feasible set. 
In contrast, first-order stationarity conditions provide tractable necessary conditions for weak Pareto optimality and are therefore more amenable to algorithmic analysis. 
This motivates the following notion of Pareto stationarity. 
For convenience, we write
\begin{equation*}
	G(\mathbf z):=
	\big[\nabla F_1(\mathbf z),\ldots,\nabla F_m(\mathbf z)\big],
	\
	G^N(\mathbf z):=
	\big[\nabla F_1^N(\mathbf z),\ldots,\nabla F_m^N(\mathbf z)\big].
\end{equation*}

\begin{definition}[\citealp{cond_PS}]
	\label{def:constrained-pareto-stat}
	A point $\mathbf z^\ast\in\mathcal X$ is called Pareto stationary for problem~\eqref{pb:multi} if
	\begin{equation}
		\label{eq:normal-cps}
		0\in G(\mathbf z^\ast)\Delta_m+\mathcal N_{\mathcal X}(\mathbf z^\ast).
	\end{equation}
\end{definition}



The corresponding notions for the SAA problem can be defined analogously. 
Specifically, Pareto optimality, weak Pareto optimality, and Pareto stationarity for problem~\eqref{pb:det_moo} are obtained by replacing $F$ and $G$ in Definitions \ref{def:pareto_op}, \ref{def:weak_pareto_op}, and \ref{def:constrained-pareto-stat} with their SAA counterparts $F^N$ and $G^N$, respectively. 
For brevity, we omit the details.

\begin{lemma}
	\label{lem:existence-pareto}
	Under Assumption~\ref{ass:standing}, problem~\eqref{pb:det_moo} admits at least one Pareto optimal point. 
\end{lemma}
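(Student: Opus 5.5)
The plan is to reduce existence of a Pareto optimal point to existence of a minimizer of a suitable scalar problem, using the weighted-sum scalarization with strictly positive weights. Work on the probability-one event of Assumption~\ref{ass:standing}(iv), so that every $F_i^N$ is bounded below and level-bounded on $\mathcal X$. By Assumption~\ref{ass:standing}(ii), each $F_i^N$ is continuous on $\mathcal X$, being a finite average of continuously differentiable functions.

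The most direct route is a lexicographic construction. Fix any $\bar{\mathbf z}\in\mathcal X$ and let
\begin{equation*}
S:=\{\mathbf z\in\mathcal X: F^N(\mathbf z)\preceq F^N(\bar{\mathbf z})\}.
\end{equation*}
This set is nonempty, since it contains $\bar{\mathbf z}$. It is closed, because $\mathcal X$ is closed and each $F_i^N$ is continuous. It is bounded, since it lies inside the sublevel set $\{\mathbf z\in\mathcal X:F_1^N(\mathbf z)\le F_1^N(\bar{\mathbf z})\}$, which is bounded by level-boundedness. Hence $S$ is compact.

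Next minimize the strictly positive weighted sum $\phi(\mathbf z):=\sum_{i=1}^m F_i^N(\mathbf z)$ over $S$. This is a continuous function on a nonempty compact set, so by Weierstrass it has a minimizer $\mathbf z^\ast\in S$.

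It remains to show that $\mathbf z^\ast$ is Pareto optimal for \eqref{pb:det_moo}. Suppose some $\mathbf z\in\mathcal X$ satisfies $F^N(\mathbf z)\preceq F^N(\mathbf z^\ast)$ with $F^N(\mathbf z)\neq F^N(\mathbf z^\ast)$. Then $F^N(\mathbf z)\preceq F^N(\mathbf z^\ast)\preceq F^N(\bar{\mathbf z})$, so $\mathbf z\in S$. Moreover, at least one component inequality is strict, so $\phi(\mathbf z)<\phi(\mathbf z^\ast)$. This contradicts the minimality of $\mathbf z^\ast$ on $S$.

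The only delicate point is compactness of $S$, which needs both closedness of $\mathcal X$ and level-boundedness of at least one objective. Both are supplied directly by the standing assumptions, so no real obstacle remains. Lower-boundedness is not even needed, since compactness together with continuity already guarantees attainment. The statement holds with probability one, which matches the almost-sure qualifier in Assumption~\ref{ass:standing}(iv).
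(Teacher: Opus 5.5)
Your proof is correct. The paper gives no argument (it calls the proof straightforward and omits it). The route it presumably intends is to minimize a strictly positive weighted sum such as $\sum_{i=1}^m F_i^N$ directly over $\mathcal X$. That sum is continuous, and it is level-bounded on $\mathcal X$ because every $F_j^N$ is bounded below and at least one is level-bounded. A minimizer therefore exists, and it is Pareto optimal because the weights are strictly positive.

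Your version minimizes the same sum only over the compact section $S=\{\mathbf z\in\mathcal X:F^N(\mathbf z)\preceq F^N(\bar{\mathbf z})\}$. This costs one extra transitivity step. In exchange, as you note, it does not need the lower-boundedness in Assumption~\ref{ass:standing}(iv), and it needs level-boundedness of only one objective.

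One cosmetic point: nothing lexicographic happens in your construction. It is a weighted-sum scalarization restricted to a dominance section.
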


\begin{proof}
	The proof is straightforward and is omitted here.
\end{proof}

\begin{theorem}[{\citealp[Lemma 1.3]{constrained_PS}}]
	\label{thm:A:necessary}
	If $\mathbf z^\ast \in\mathcal X$ is a weakly Pareto optimal point of problem~\eqref{pb:det_moo}, then $\mathbf z^\ast$ is Pareto stationary for problem~\eqref{pb:det_moo}.
\end{theorem}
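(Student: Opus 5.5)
We argue by contraposition: if $\mathbf z^\ast$ is not Pareto stationary, we construct a feasible point that strictly improves every objective.

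\textbf{Step 1 (separation).} Suppose $0\notin G^N(\mathbf z^\ast)\Delta_m+\mathcal N_{\mathcal X}(\mathbf z^\ast)$. The set $C:=G^N(\mathbf z^\ast)\Delta_m$ is compact and convex, since it is the linear image of the simplex. The normal cone $\mathcal N_{\mathcal X}(\mathbf z^\ast)$ is a closed convex cone. Hence $C+\mathcal N_{\mathcal X}(\mathbf z^\ast)$ is closed and convex, because the sum of a compact convex set and a closed convex set is closed. Strict separation of $0$ from this set gives $\mathbf d\in\mathbb R^n$ and $\epsilon>0$ with
\[
\mathbf d^\top(c+v)\le-\epsilon\quad\text{for all } c\in C,\ v\in\mathcal N_{\mathcal X}(\mathbf z^\ast).
\]

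\textbf{Step 2 (the direction is feasible and a common descent direction).}
\begin{itemize}
\item Taking $v=0$ and $c=\nabla F_i^N(\mathbf z^\ast)$, a vertex image, gives $\nabla F_i^N(\mathbf z^\ast)^\top\mathbf d\le-\epsilon$ for every $i\in[m]$.
\item Since the normal cone is a cone, scaling $v\to tv$ with $t\to\infty$ forces $\mathbf d^\top v\le0$ for all $v\in\mathcal N_{\mathcal X}(\mathbf z^\ast)$.
\item Therefore $\mathbf d$ lies in the polar of the normal cone. That polar is the tangent cone $T_{\mathcal X}(\mathbf z^\ast)=\operatorname{cl}\{t(\mathbf y-\mathbf z^\ast):\mathbf y\in\mathcal X,\ t\ge0\}$.
\end{itemize}

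\textbf{Step 3 (main obstacle: passing from the tangent cone to an actual feasible direction).} The direction $\mathbf d$ may be only a limit of feasible directions. Pick $\mathbf y\in\mathcal X$ and $t>0$ with $\|t(\mathbf y-\mathbf z^\ast)-\mathbf d\|$ small. By Assumption~\ref{ass:standing} each $\nabla F_i^N(\mathbf z^\ast)$ is finite, so by continuity of the linear functionals the direction $\tilde{\mathbf d}:=\mathbf y-\mathbf z^\ast$ satisfies
\[
\nabla F_i^N(\mathbf z^\ast)^\top\tilde{\mathbf d}<0\quad\text{for all } i.
\]
By convexity, $\mathbf z^\ast+s\tilde{\mathbf d}\in\mathcal X$ for every $s\in[0,1]$.

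\textbf{Step 4 (Taylor expansion).} Each $F_i^N$ is continuously differentiable on an open set containing $\mathcal X$, by Assumption~\ref{ass:standing}(ii) averaged over the samples. Hence
\[
F_i^N(\mathbf z^\ast+s\tilde{\mathbf d})=F_i^N(\mathbf z^\ast)+s\nabla F_i^N(\mathbf z^\ast)^\top\tilde{\mathbf d}+o(s).
\]
Since there are finitely many $i$, there is $s>0$ small with $F^N(\mathbf z^\ast+s\tilde{\mathbf d})\prec F^N(\mathbf z^\ast)$. This contradicts weak Pareto optimality of $\mathbf z^\ast$.
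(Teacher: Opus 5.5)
Your proof is correct. Compactness of $G^N(\mathbf z^\ast)\Delta_m$ makes its sum with the normal cone closed, so the strict separation in Step 1 is legitimate. The cone-scaling in Step 2 places $\mathbf d$ in $\mathcal N_{\mathcal X}(\mathbf z^\ast)^\circ=T_{\mathcal X}(\mathbf z^\ast)$. The uniform margin $\epsilon$ is exactly what lets Step 3 replace $\mathbf d$ by a genuine feasible direction $\mathbf y-\mathbf z^\ast$; since $\mathbf d\neq0$, close approximants automatically have $t>0$. Step 4 uses only differentiability of each $F_i^N$ at $\mathbf z^\ast$, which holds on the probability-one event where every $\xi_j$ satisfies Assumption~\ref{ass:standing}(ii).

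The paper gives no argument of its own and simply quotes Lemma 1.3 of the cited reference. In the vector-optimization literature such lemmas are commonly stated for the direction-based notion of criticality: no $\mathbf y\in\mathcal X$ satisfies $\nabla F_i^N(\mathbf z^\ast)^\top(\mathbf y-\mathbf z^\ast)<0$ for all $i\in[m]$. In that formulation the proof is essentially your Step 4. Passing to the multiplier form $0\in G^N(\mathbf z^\ast)\Delta_m+\mathcal N_{\mathcal X}(\mathbf z^\ast)$ of Definition~\ref{def:constrained-pareto-stat} then requires a Gordan-type theorem of the alternative, which the paper leaves implicit. Your Steps 1--3 supply precisely this bridge, so your argument is self-contained with respect to the paper's definition.

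A slightly shorter bridge, which avoids the tangent-cone density step, is Sion's minimax theorem applied to the bilinear function $(\lambda,\mathbf y)\mapsto\lambda^\top G^N(\mathbf z^\ast)^\top(\mathbf y-\mathbf z^\ast)$ on $\Delta_m\times\mathcal X$; compactness of $\Delta_m$ suffices. Step 4 and weak Pareto optimality give $\max_{i\in[m]}\nabla F_i^N(\mathbf z^\ast)^\top(\mathbf y-\mathbf z^\ast)\ge0$ for every $\mathbf y\in\mathcal X$. Interchanging $\inf_{\mathbf y}$ and $\max_{\lambda}$ then yields some $\lambda\in\Delta_m$ with $(G^N(\mathbf z^\ast)\lambda)^\top(\mathbf y-\mathbf z^\ast)\ge0$ for all $\mathbf y\in\mathcal X$, that is, $-G^N(\mathbf z^\ast)\lambda\in\mathcal N_{\mathcal X}(\mathbf z^\ast)$. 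Your separation route costs the extra approximation argument but uses nothing beyond elementary convex analysis.
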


\subsection{Pareto-stationarity residual}

\label{sec:A:constr-stat}

For $\mathbf z \in\mathcal X$ and $H \in \mathbb R^{n\times m}$, we define
\begin{equation*}
	\vartheta_{\mathcal X}(\mathbf z;H)
	:=
	\operatorname{dist}
	\left(
	0,\,
	H\Delta_m+\mathcal N_{\mathcal X}(\mathbf z)
	\right).
\end{equation*}
Then the population Pareto-stationarity residual can be represented by
\begin{equation}
	\label{eq:A:thetaX}
	\Theta_{\mathcal X}(\mathbf z)
	:=
	\vartheta_{\mathcal X}(\mathbf z;G(\mathbf z))
	= 
	\operatorname{dist}
	\left(
	0,\,
	G(\mathbf z)\Delta_m+\mathcal N_{\mathcal X}(\mathbf z)
	\right).
\end{equation}
Moreover, the SAA Pareto-stationarity residual is
\begin{equation}
	\label{eq:A:thetaXN}
	\Theta_{\mathcal X}^N(\mathbf z)
	:=
	\vartheta_{\mathcal X}(\mathbf z;G^N(\mathbf z))
	=
	\operatorname{dist}
	\left(
	0,\,
	G^N(\mathbf z)\Delta_m+\mathcal N_{\mathcal X}(\mathbf z)
	\right).
\end{equation}
The equality $\Theta_{\mathcal X}(\mathbf z)=0$ holds exactly when some $\lambda\in\Delta_m$ satisfies
\begin{equation*}
	-G(\mathbf z)\lambda\in \mathcal N_{\mathcal X}(\mathbf z),
\end{equation*}
which characterizes Pareto stationarity for problem~\eqref{pb:multi}. 
Similarly, the condition $\Theta_{\mathcal X}^N(\mathbf z)=0$ gives the corresponding characterization for the SAA problem~\eqref{pb:det_moo}.

\section{Algorithm Design}

\label{sec:A:gradients}

In this section, we devise a projected regularized multi-gradient algorithm for solving the SAA problem~\eqref{pb:det_moo}. 
The proposed method is fully adaptive and enjoys convergence guarantees without requiring prior knowledge of any problem-specific constants or parameters.


\subsection{Projected regularized multi-gradient step}

\label{sec:A:reg}

We first construct a feasible multi-gradient step that will be used in the proposed algorithm. 
Recall from Subsection~\ref{sec:A:constr-stat} that Pareto stationarity at a point $\mathbf z^\ast \in \mathcal X$ is characterized by the existence of a multiplier $\lambda \in \Delta_m$ such that
\begin{equation*}
	- G (\mathbf z^\ast) \lambda \in \mathcal N_{\mathcal X}(\mathbf z^\ast).
\end{equation*}
Our goal is therefore to construct, from a current point $\mathbf z \in \mathcal X$, a feasible displacement $\mathbf p$ that provides an approximate relation of this form at the new point $\mathbf z + \mathbf p$.

For notational simplicity, we use $H \in \mathbb R^{n \times m}$ to denote a generic gradient matrix in the following discussion. 
To preserve feasibility, we define the displacement set
\begin{equation*}
	\mathcal D (\mathbf z)
	:= \mathcal X - \mathbf z
	= \{\mathbf p \in \mathbb R^n : \mathbf z + \mathbf p \in \mathcal X\}.
\end{equation*}
Then we consider the following projected regularized multi-gradient subproblem,
\begin{equation}
	\label{eq:A:proj-reg-cauchy}
	\mathbf p_{\rho,\sigma}(\mathbf z;H)
	:=
	\arg\min_{\mathbf p\in\mathcal D(\mathbf z)}
	\left\{
	\frac{\sigma}{2}\|\mathbf p\|^2
	+
	\max_{\lambda\in\Delta_m}
	\left[
	\langle H\lambda,\mathbf p\rangle
	-
	\frac{\rho}{2\sigma}
	\left(\|\lambda\|^2-\frac1m\right)
	\right]
	\right\},
\end{equation}
where $\sigma > 0$ and $\rho > 0$ are two regularization parameters. 
The shift $1/m$ normalizes the inner maximization so that its value at $\mathbf p=0$ is zero.

In subproblem~\eqref{eq:A:proj-reg-cauchy}, two regularization parameters play different roles. 
The parameter $\sigma$ controls the magnitude of the resulting displacement, whereas $\rho$ regularizes the convex combination of the gradients. 
In particular, the Tikhonov regularization term in the inner maximization problem with respect to $\lambda$ provides a unique and stable multiplier selection. 
Specifically, we define
\begin{equation*}
	\Lambda_{\rho,\sigma}(\mathbf p;H)
	:=
	\arg\max_{\lambda\in\Delta_m}
	\left[
	\langle H\lambda,\mathbf p\rangle
	-
	\frac{\rho}{2\sigma}
	\left(\|\lambda\|^2-\frac1m\right)
	\right],
\end{equation*}
for fixed $\mathbf p$. 
Completing the square gives
\begin{equation}
	\label{eq:inner-lambda-proj}
	\Lambda_{\rho,\sigma}(\mathbf p;H)
	=
	\operatorname{Proj}_{\Delta_m}
	\left(
	\frac{\sigma}{\rho}H^\top \mathbf p
	\right).
\end{equation}
Thus, the multiplier is obtained by a projection onto $\Delta_m$.

Let
\begin{equation*}
	\psi_{\rho,\sigma}(u)
	:=
	\max_{\lambda\in\Delta_m}
	\left[
	\langle \lambda,u\rangle
	-
	\frac{\rho}{2\sigma}
	\left(\|\lambda\|^2-\frac1m\right)
	\right],
	\quad u\in\mathbb R^m.
\end{equation*}
Then the objective function in \eqref{eq:A:proj-reg-cauchy} can be written as
\begin{equation*}
	\Psi_{\rho,\sigma}(\mathbf p;\mathbf z,H)
	:=
	\frac{\sigma}{2}\|\mathbf p\|^2+\psi_{\rho,\sigma}(H^\top \mathbf p),
	\quad \mathbf p\in\mathcal D(\mathbf z).
\end{equation*}
By Danskin's theorem and \eqref{eq:inner-lambda-proj}, we have
\begin{equation*}
	\nabla_\mathbf p\Psi_{\rho,\sigma}(\mathbf p;\mathbf z,H)
	=
	\sigma \mathbf p
	+
	H\Lambda_{\rho,\sigma}(\mathbf p;H).
\end{equation*}
The function $\Psi_{\rho,\sigma}(\cdot;\mathbf z,H)$ is strongly convex in $\mathbf p$. 
Since $\mathcal D(\mathbf z)$ is nonempty, closed, and convex, subproblem~\eqref{eq:A:proj-reg-cauchy} has the unique solution $\mathbf p_{\rho,\sigma}(\mathbf z;H)$.

According to the optimality condition of subproblem \eqref{eq:A:proj-reg-cauchy}, we can obtain that
\begin{equation*}
	0\in
	\sigma \mathbf p
	+
	H\Lambda_{\rho,\sigma}(\mathbf p;H)
	+
	\mathcal N_{\mathcal D(\mathbf z)}(\mathbf p),
\end{equation*}
for $\mathbf p=\mathbf p_{\rho,\sigma}(\mathbf z;H)$. 
Since $\mathcal D(\mathbf z)=\mathcal X-\mathbf z$, we have $\mathcal N_{\mathcal D(\mathbf z)}(\mathbf p)=\mathcal N_{\mathcal X}(\mathbf z+\mathbf p)$. 
It then follows that
\begin{equation}
	\label{eq:normal-certificate-subproblem}
	-H\lambda-\sigma \mathbf p_{\rho,\sigma}(\mathbf z;H)
	\in
	\mathcal N_{\mathcal X}(\mathbf z+\mathbf p_{\rho,\sigma}(\mathbf z;H)),
\end{equation}
for $\lambda:=\Lambda_{\rho,\sigma}(\mathbf p_{\rho,\sigma}(\mathbf z;H);H)$.

When $\mathcal X=\mathbb R^n$, we have $\mathcal D(\mathbf z)=\mathbb R^n$.
In this case, subproblem~\eqref{eq:A:proj-reg-cauchy} admits the following closed-form solution,
\begin{equation*}
	\mathbf p_{\rho,\sigma}(\mathbf z;H)
	=
	-\frac1\sigma H\lambda_\rho(H),
\end{equation*}
where
\begin{equation*}
	\lambda_\rho(H)
	=
	\arg\min_{\lambda\in\Delta_m}
	\left\{
	\frac12\|H\lambda\|^2+\frac{\rho}{2}\|\lambda\|^2
	\right\}.
\end{equation*}
As a result, the algorithmic update becomes $\mathbf z^+ = \mathbf z+\mathbf p_{\rho,\sigma}(\mathbf z;H) = \mathbf z-H\lambda_\rho(H)/\sigma$, which is the regularized multi-gradient step with the stepsize $1 / \sigma$.


\begin{remark}
	\label{rem:compute-projected-cauchy}
	The cost of computing $\mathbf p_{\rho,\sigma}(\mathbf z;H)$ depends on the structure of $\mathcal X$, which is inexpensive when $\mathcal X$ is a box, a simplex, a Euclidean ball, or a structured set $\mathcal X$ with tractable projection. 
	Two standard first-order approaches are described below.

	The first one is the projected-gradient algorithm \citep{condat_l1ball,proximal_alg}, whose update scheme can be expressed as
	\begin{equation*}
		\begin{aligned}
			\mathbf p^{r+1}
			=
			\operatorname{Proj}_{\mathcal D(\mathbf z)}
			\biggl[
			\mathbf p^r
			-
			\alpha_r\biggl(
			\sigma \mathbf p^r
			&+
			H\operatorname{Proj}_{\Delta_m}
			\biggl(
			\frac{\sigma}{\rho}H^\top \mathbf p^r
			\biggr)
			\biggr)
			\biggr].
		\end{aligned}
	\end{equation*}
	Nonexpansiveness of Euclidean projections implies that the gradient of the objective function has Lipschitz constant at most
	\begin{equation*}
		L_{\mathbf p}:=\sigma+\frac{\sigma}{\rho}\|H\|_2^2.
	\end{equation*}
	Thus, the standard convergence guarantee of projected-gradient algorithms applies, for example, when $0<\alpha_r<2/L_{\mathbf p}$.
	Since $\operatorname{Proj}_{\mathcal D(\mathbf z)}(q)=\operatorname{Proj}_{\mathcal X}(\mathbf z+q)-\mathbf z$, this algorithm only requires projections onto $\mathcal X$ and the simplex $\Delta_m$.

	Subproblem~\eqref{eq:A:proj-reg-cauchy} also has a saddle-point form and can be treated by primal-dual splitting methods \citep{proj_method_for_subprob}.
	Let $\iota_C$ denote the indicator function of a set $C$, which is zero on $C$ and $+\infty$ otherwise. 
	We define
	\begin{equation*}
		f(\mathbf p):=\frac{\sigma}{2}\|\mathbf p\|^2+\iota_{\mathcal D(\mathbf z)}(\mathbf p),
		\qquad
		g(\lambda):=\frac{\rho}{2\sigma}
		\left(\|\lambda\|^2-\frac1m\right)+\iota_{\Delta_m}(\lambda).
	\end{equation*}
	With stepsizes $\tau_{\mathbf p},\tau_\lambda>0$ satisfying $\tau_{\mathbf p}\tau_\lambda\|H\|_2^2<1$, we can use the following method to solve subproblem~\eqref{eq:A:proj-reg-cauchy},
	\begin{equation*}
		\left\{
		\begin{aligned}
			\lambda^{r+1}
			& =
			\operatorname{prox}_{\tau_\lambda g}
			\left(
			\lambda^r+\tau_\lambda H^\top \bar{\mathbf p}^{r}
			\right),\\
			\mathbf p^{r+1}
			& =
			\operatorname{prox}_{\tau_{\mathbf p} f}
			\left(
			\mathbf p^r-\tau_{\mathbf p} H\lambda^{r+1}
			\right),\\
			\bar{\mathbf p}^{r+1}
			& =
			\mathbf p^{r+1}
			+
			\theta(\mathbf p^{r+1}-\mathbf p^r),
		\end{aligned}
		\right.
	\end{equation*}
	where $\theta\in[0,1]$ is a constant. 
\end{remark}

\subsection{Algorithm description}

\label{sec:C:algorithm-subsection}

We now combine the projected regularized multi-gradient step with an adaptive scheme for updating parameters. 
The resulting method is summarized in Algorithm~\ref{alg:projected-SAA-RMGDA}, which preserves feasibility at every iteration.

\begin{algorithm}[htbp]
	\caption{SAA Regularized Multi-Gradient Descent Algorithm (SAA-RMGDA)}
	\label{alg:projected-SAA-RMGDA}
	\begin{algorithmic}
		\REQUIRE Initial point $\mathbf z_0\in\mathcal X$; initial regularization parameter $\sigma_0>0$; Tikhonov parameters $\{\rho_k\}_{k\ge0}\subseteq\mathbb R_{++}$ with $\sum_{k = 0}^{\infty} \rho_k < \infty$.
		\STATE Set $k:=0$. 
		\WHILE{stopping criterion not met}
		\STATE \textbf{(SAA gradients)} For each $i\in[m]$, set
		$
		g_i^N(\mathbf z_k):=\nabla F_i^N(\mathbf z_k),
		$
		and form
		$
		G_k^N
		:=
		G^N(\mathbf z_k)
		=
		[g_1^N(\mathbf z_k),\ldots,g_m^N(\mathbf z_k)].
		$
		
		\STATE \textbf{(Projected regularized multi-gradient step)} Compute
		\begin{equation}
			\label{subp:pk}
			\mathbf p_k
			\in
			\arg\min_{\mathbf p\in\mathcal D(\mathbf z_k)}
			\left\{
			\frac{\sigma_k}{2}\|\mathbf p\|^2
			+
			\max_{\lambda\in\Delta_m}
			\left[
			\langle G_k^N\lambda,\mathbf p\rangle
			-
			\frac{\rho_k}{2\sigma_k}
			\left(\|\lambda\|^2-\frac1m\right)
			\right]
			\right\}.
		\end{equation}
		
		\STATE \textbf{(Update step)} Set
		$
		\mathbf z_{k+1}:=\mathbf z_k+\mathbf p_k.
		$
		
		\STATE \textbf{(Regularization parameter)} Set
		$
		\sigma_{k+1}:=\sigma_k+\sigma_k\|\mathbf p_k\|^2.
		$
		
		\STATE $k:=k+1$.
		\ENDWHILE
	\end{algorithmic}
\end{algorithm}


A key feature of the proposed method is that the regularization parameter is updated adaptively using information generated by the current iterate, without requiring any prior knowledge of problem-dependent constants such as the Lipschitz constants. 
Specifically, after computing the projected regularized multi-gradient step $\mathbf p_k$, the parameter $\sigma_k$ is updated according to $\sigma_{k+1}:=\sigma_k+\sigma_k\|\mathbf p_k\|^2$. 
Hence, the amount of regularization is automatically adjusted according to the magnitude of the computed step. 
A larger step produces a larger increase in $\sigma_k$, whereas a small step results in only a mild adjustment. 
This mechanism eliminates the need for line-search procedures or prescribed stepsizes. 
The auxiliary Tikhonov parameter $\rho_k$ follows an arbitrary positive-summable sequence and is used to regularize the simplex multiplier.
Based on these constructions, Algorithm~\ref{alg:projected-SAA-RMGDA} enjoys the convergence and complexity guarantees, which will be established in the next section.

\section{Convergence Analysis}

\label{sec:C:complexity}

In this section, we demonstrate convergence properties of our algorithm. 
It is shown that every accumulation point is Pareto stationary for problem~\eqref{pb:det_moo}. 
We also establish its iteration complexity.

Let $\lambda_k \in \Delta_m$ be the unique maximizer in the inner problem of \eqref{subp:pk} for every $k \geq 0$. 
Since $\Delta_m$ is closed and compact, the sequence $\{\lambda_k\}$ has at least one accumulation point. 
To analyze the behavior of iterates, we choose an arbitrary accumulation point $\hat{\lambda} \in \Delta_m$ of $\{\lambda_k\}$ and introduce a scalar function
\begin{equation*}
	\Phi^N(\mathbf z):=\sum_{i=1}^m \hat{\lambda}_i F_i^N(\mathbf z).
\end{equation*}
Assumption~\ref{ass:standing}(iv) makes $\Phi^N$ both bounded below and level-bounded on $\mathcal X$. 
Let $\Phi_{\rm low}^N$ be any lower bound of $\Phi^N$ on $\mathcal X$. 
Moreover, we define
\begin{equation*}
	S_\infty := \sum_{k = 0}^{\infty} \rho_k < \infty.
\end{equation*}

\begin{lemma}
	\label{lem:C:constrained-descent}
	Let $\{\mathbf z_k\}$ be a sequence generated by Algorithm~\ref{alg:projected-SAA-RMGDA}. 
	Then, for all $k \geq 0$, we have
	\begin{equation}
		\label{eq:C:potential-descent}
		\Phi^N (\mathbf z_k) - \Phi^N (\mathbf z_{k + 1})
		\geq
		\frac{1}{2} \left(\sigma_k - L_\Phi^N \right)  \|\mathbf p_k\|^2
		- \frac{1}{\sigma_0} \kappa_m \rho_k,
	\end{equation}
	where $L_\Phi^N := \sum_{i = 1}^{m} \hat{\lambda}_i L_i^N$ and $\kappa_m := (1 - 1 / m) / 2$ are two constants. 
\end{lemma}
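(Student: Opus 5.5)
The plan is to combine the descent lemma for the weighted scalar function $\Phi^N$ with a comparison of the subproblem objective at $\mathbf p_k$ against its value at the feasible point $\mathbf p=0$. The weight $\hat\lambda$ is fixed, so it can be inserted as a trial multiplier in the inner maximization. No normal-cone argument is needed; only minimality of $\mathbf p_k$ and convexity of $\mathcal X$ are used.

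\textbf{Step 1 (descent lemma).} By Assumption~\ref{ass:standing}, with probability one each $\nabla F_i^N$ is $L_i^N$-Lipschitz on $\mathcal X$. Hence $\nabla\Phi^N=G^N\hat\lambda$ is $L_\Phi^N$-Lipschitz on $\mathcal X$. Since $\mathbf z_k,\mathbf z_{k+1}\in\mathcal X$ and $\mathcal X$ is convex, the segment joining them lies in $\mathcal X$. The standard integral form of the descent lemma then gives
\begin{equation*}
\Phi^N(\mathbf z_{k+1})\le \Phi^N(\mathbf z_k)+\langle G_k^N\hat\lambda,\mathbf p_k\rangle+\frac{L_\Phi^N}{2}\|\mathbf p_k\|^2 .
\end{equation*}

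\textbf{Step 2 (bound the linear term via subproblem optimality).} Since $\mathbf z_k\in\mathcal X$, we have $0\in\mathcal D(\mathbf z_k)$, so $\Psi_{\rho_k,\sigma_k}(\mathbf p_k;\mathbf z_k,G_k^N)\le\Psi_{\rho_k,\sigma_k}(0;\mathbf z_k,G_k^N)=\psi_{\rho_k,\sigma_k}(0)$. Moreover $\psi_{\rho_k,\sigma_k}(0)=\max_{\lambda\in\Delta_m}\bigl[-\tfrac{\rho_k}{2\sigma_k}(\|\lambda\|^2-\tfrac1m)\bigr]=0$, because $\|\lambda\|^2\ge 1/m$ on $\Delta_m$ with equality at the uniform vector. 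Plugging the particular choice $\lambda=\hat\lambda$ into the inner maximum gives the lower bound
\begin{equation*}
\psi_{\rho_k,\sigma_k}((G_k^N)^\top\mathbf p_k)\ge\langle G_k^N\hat\lambda,\mathbf p_k\rangle-\frac{\rho_k}{2\sigma_k}\Bigl(\|\hat\lambda\|^2-\frac1m\Bigr)\ge\langle G_k^N\hat\lambda,\mathbf p_k\rangle-\frac{\kappa_m\rho_k}{\sigma_k},
\end{equation*}
where the last step uses $\|\hat\lambda\|^2\le\|\hat\lambda\|_1^2=1$. Combining the two facts yields
\begin{equation*}
\langle G_k^N\hat\lambda,\mathbf p_k\rangle\le-\frac{\sigma_k}{2}\|\mathbf p_k\|^2+\frac{\kappa_m\rho_k}{\sigma_k}.
\end{equation*}

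\textbf{Step 3 (conclude).} The update $\sigma_{k+1}=\sigma_k(1+\|\mathbf p_k\|^2)$ shows that $\{\sigma_k\}$ is nondecreasing, so $\sigma_k\ge\sigma_0$ and $\kappa_m\rho_k/\sigma_k\le\kappa_m\rho_k/\sigma_0$. Substituting the bound from Step~2 into Step~1 and rearranging gives \eqref{eq:C:potential-descent}.

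The only delicate point is Step~2. One must notice that comparing with $\mathbf p=0$ is legitimate, which relies on feasibility of $\mathbf z_k$ and on the $1/m$ shift making $\psi_{\rho_k,\sigma_k}(0)=0$. One must also notice that $\hat\lambda$, although it is not the actual maximizer $\lambda_k$, may be used as a trial point in the inner maximum, at the cost of the penalty term $\kappa_m\rho_k/\sigma_k$.
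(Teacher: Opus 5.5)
Your proof is correct and follows essentially the same route as the paper: you compare the subproblem objective at $\mathbf p_k$ with its zero value at the feasible point $\mathbf p=0$, lower-bound the inner maximum by a trial multiplier, apply the descent lemma for $\Phi^N$, and use $\sigma_k\ge\sigma_0$. The only difference is cosmetic. The paper inserts each vertex $e_i$ to get $\nabla F_i^N(\mathbf z_k)^\top\mathbf p_k\le-\frac{\sigma_k}{2}\|\mathbf p_k\|^2+\frac{\kappa_m\rho_k}{\sigma_k}$ for every $i$ and then averages with weights $\hat\lambda$, whereas you insert $\hat\lambda$ directly and bound $\|\hat\lambda\|^2\le 1$; both give the identical inequality.
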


\begin{proof}
	In subproblem~\eqref{subp:pk}, $\mathbf p = 0$ is feasible and the objective value at $\mathbf p = 0$ is zero. 
	Let $e_i$ be the $i$-th standard basis vector in $\mathbb{R}^m$. 
	Then the optimality of $\mathbf p_k$ and evaluation of the inner maximum at $\lambda = e_i$ give
	\begin{equation*}
		\nabla F_i^N (\mathbf z_k)^\top \mathbf p_k
		\leq
		-\frac{1}{2} \sigma_k \|\mathbf p_k\|^2
		+ \frac{1}{\sigma_k}\kappa_m \rho_k,
	\end{equation*}
	for every $i \in [m]$. 
	Taking the weighted sum and using the $L_\Phi^N$-smoothness of $\Phi^N$ yields
	\begin{equation*}
		\Phi^N(\mathbf z_k)-\Phi^N(\mathbf z_{k+1})
		\ge
		\frac{1}{2} \left(\sigma_k - L_\Phi^N \right)  \|\mathbf p_k\|^2
		-\frac{1}{\sigma_k} \kappa_m \rho_k.
	\end{equation*}
	Since $\sigma_k\ge\sigma_0$, we finally obtain \eqref{eq:C:potential-descent}.
\end{proof}

\begin{lemma}
	\label{lem:bd-sigma}
	There exists a constant $\Sigma^N > 0$ such that
	\begin{equation*}
		\sigma_k \leq \Sigma^N,
	\end{equation*}
	for all $k \geq 0$. 
\end{lemma}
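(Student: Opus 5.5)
The plan is the standard OFFO/AdaGrad-type argument: telescope the descent inequality of Lemma~\ref{lem:C:constrained-descent}, and exploit that $\sigma_{k+1}-\sigma_k=\sigma_k\|\mathbf p_k\|^2$.

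Write $\Delta_k:=\sigma_k\|\mathbf p_k\|^2=\sigma_{k+1}-\sigma_k$. Then $\sigma_k\|\mathbf p_k\|^2=\Delta_k$ and $\|\mathbf p_k\|^2=\Delta_k/\sigma_k$. Because $\sigma_k$ is nondecreasing, it has a limit in $(0,\infty]$; we argue by contradiction and suppose $\sigma_k\to\infty$.

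Then there is a first index $k_0$ with $\sigma_{k_0}\ge 2L_\Phi^N$, and for $k\ge k_0$ we have $\sigma_k-L_\Phi^N\ge\sigma_k/2$. Lemma~\ref{lem:C:constrained-descent} therefore gives
\[
\Phi^N(\mathbf z_k)-\Phi^N(\mathbf z_{k+1})\ \ge\ \tfrac14\,\Delta_k-\tfrac{\kappa_m}{\sigma_0}\rho_k,\qquad k\ge k_0 .
\]
For $k<k_0$ the coefficient $\tfrac12(\sigma_k-L_\Phi^N)$ may be negative. In that range we use $\sigma_k<2L_\Phi^N$ to get $\tfrac12(\sigma_k-L_\Phi^N)\|\mathbf p_k\|^2\ge -\tfrac12 L_\Phi^N\Delta_k/\sigma_k\ge -\tfrac{L_\Phi^N}{2\sigma_0}\Delta_k$. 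Since $\sum_{k<k_0}\Delta_k=\sigma_{k_0}-\sigma_0$, this loss is controlled. Indeed, $\sigma_{k_0}=\sigma_{k_0-1}(1+\|\mathbf p_{k_0-1}\|^2)$ with $\sigma_{k_0-1}<2L_\Phi^N$.

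The main obstacle is bounding the single step $\|\mathbf p_{k_0-1}\|$. I would obtain it from the inequality $\nabla F_i^N(\mathbf z_k)^\top\mathbf p_k\le-\tfrac12\sigma_k\|\mathbf p_k\|^2+\kappa_m\rho_k/\sigma_k$ in the proof of Lemma~\ref{lem:C:constrained-descent}. It gives $\tfrac12\sigma_k\|\mathbf p_k\|^2\le\|\nabla F_1^N(\mathbf z_k)\|\|\mathbf p_k\|+\kappa_m\rho_k/\sigma_0$, hence $\|\mathbf p_k\|\le C(\|\nabla F_1^N(\mathbf z_k)\|+1)/\sigma_0$. It is therefore enough to know that $\mathbf z_k$ stays in a compact set, on which the gradients are bounded.

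Summing the displayed inequality over all $k$ and using $\Phi^N\ge\Phi^N_{\rm low}$ and $\sum\rho_k=S_\infty$ gives
\[
\tfrac14\sum_{k\ge k_0}\Delta_k\ \le\ \Phi^N(\mathbf z_{0})-\Phi^N_{\rm low}+\tfrac{L_\Phi^N}{2\sigma_0}(\sigma_{k_0}-\sigma_0)+\tfrac{\kappa_m}{\sigma_0}S_\infty .
\]
The right-hand side is finite, so $\sigma_k=\sigma_{k_0}+\sum_{k_0\le j<k}\Delta_j$ is bounded, which contradicts $\sigma_k\to\infty$. The same computation yields an explicit bound $\Sigma^N$.

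To close the gap on boundedness of the iterates, I would use the same partial sums. For any $k$, $\Phi^N(\mathbf z_k)\le\Phi^N(\mathbf z_0)+\tfrac{L_\Phi^N}{2\sigma_0}\min(\sigma_k,2L_\Phi^N)+\kappa_m S_\infty/\sigma_0$. The middle term is bounded by $(L_\Phi^N)^2/\sigma_0$, up to a term from one step before $k_0$, but that step is the one being controlled. To break this circularity, I would apply the telescoped bound before $k_0$ only up to $k_0-1$, where $\sigma_{k_0-1}<2L_\Phi^N$. This shows $\Phi^N(\mathbf z_{k_0-1})$ is bounded, so by the level-boundedness in Assumption~\ref{ass:standing}(iv), $\mathbf z_{k_0-1}$ lies in a fixed compact level set $S$. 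Then $\|\nabla F_1^N\|\le\frac1N\sum_j M_{1,S}(\xi_j)$ on $S$ bounds $\|\mathbf p_{k_0-1}\|$, and hence $\sigma_{k_0}$, by a constant depending only on the data. I expect this bookkeeping around the crossing index $k_0$, and the dependence of $\Phi^N$ on $\hat\lambda$, to be the delicate part.
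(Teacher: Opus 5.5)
Your argument is correct, and its core is the paper's. The paper also takes the first index $\bar k$ with $\sigma_{\bar k}\ge 2L_\Phi^N$, uses $\frac12(\sigma_k-L_\Phi^N)\ge\frac14\sigma_k$ for $k\ge\bar k$ to get $\Phi^N(\mathbf z_k)-\Phi^N(\mathbf z_{k+1})\ge\frac14(\sigma_{k+1}-\sigma_k)-\frac{\kappa_m}{\sigma_0}\rho_k$, and telescopes against $\Phi^N_{\rm low}$ and $S_\infty$. No contradiction is needed: if $\bar k$ does not exist, then $\sigma_k<2L_\Phi^N$ for all $k$.

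The paper telescopes from $\bar k$ rather than from $0$. It obtains $\Sigma^N=\max\{2L_\Phi^N,\ \sigma_{\bar k}+4(\Phi^N(\mathbf z_{\bar k})-\Phi^N_{\rm low}+\kappa_m S_\infty/\sigma_0)\}$ and simply lets this constant depend on the finite numbers $\sigma_{\bar k}$ and $\Phi^N(\mathbf z_{\bar k})$. For the same reason, what you call the main obstacle is not an obstacle for the lemma as stated. Your own summed inequality, with $\sigma_{k_0}$ on the right-hand side, already finishes the proof, because $k_0$ is a finite index and so $\sigma_{k_0}<\infty$.

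Your additional step is still correct. You place $\mathbf z_{k_0-1}$ in a compact level set, bound $\|\mathbf p_{k_0-1}\|$ by the gradient bound there, and hence get $\sigma_{k_0}<2L_\Phi^N(1+\|\mathbf p_{k_0-1}\|^2)$ when $k_0\ge1$. This buys something the paper's version does not have: an a priori $\Sigma^N$ in terms of $\Phi^N(\mathbf z_0)$, $\Phi^N_{\rm low}$, $L_\Phi^N$, $\sigma_0$, $S_\infty$ and a gradient bound on a level set. That would make the constant in Theorem~\ref{thm:C:constrained-residual-complexity} computable in advance rather than trajectory-dependent. The price is that it needs level-boundedness and gradient bounds on compact sets, whereas the paper's proof of this lemma uses only that $\Phi^N$ is bounded below.

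Finally, the dependence on $\hat\lambda$ that you flag is harmless. The proof of Lemma~\ref{lem:C:constrained-descent} uses only $\hat\lambda\in\Delta_m$, so for the a priori version you can fix $\hat\lambda=e_1$, i.e.\ work with $\Phi^N=F_1^N$. This function is level-bounded directly by Assumption~\ref{ass:standing}(iv).
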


\begin{proof}
	Let $\bar{k} := \inf \{k \geq 0 \mid \sigma_k \geq 2 L_\Phi^N\}$. 
	If $\bar{k}$ does not exist, we have $\sigma_k < 2 L_\Phi^N$ for all $k \geq 0$. 
	Then the sequence $\{\sigma_k\}$ is already bounded. 
	Hence, we next consider the scenario where $\bar{k} <  \infty$. 
	Since the sequence $\{\sigma_k\}$ is nondecreasing, it holds that $\sigma_k \geq 2 L_\Phi^N$ for all $k \geq \bar{k}$. 
	As a result, we have
	\begin{equation*}
		\frac{1}{2} \left(\sigma_k - L_\Phi^N \right) \geq \frac{1}{4} \sigma_k,
	\end{equation*}
	for all $k \geq \bar{k}$. 
	Then it follows from \eqref{eq:C:potential-descent} that
	\begin{equation*}
		\Phi^N (\mathbf z_k) - \Phi^N (\mathbf z_{k + 1})
		\geq
		\frac{1}{4} \sigma_k \|\mathbf p_k\|^2
		- \frac{1}{\sigma_0} \kappa_m \rho_k
		=
		\frac{1}{4} (\sigma_{k + 1} - \sigma_k)
		- \frac{1}{\sigma_0} \kappa_m \rho_k.
	\end{equation*}
	Summing the above relationship over $k$ from $\bar{k}$ to $K - 1$, we arrive at
	\begin{equation*}
		\frac{1}{4} (\sigma_{K} - \sigma_{\bar{k}})
		\leq \Phi^N (\mathbf z_{\bar{k}}) - \Phi_{\rm low}^N
		+ \frac{\kappa_m}{\sigma_0} \sum_{k = \bar{k}}^{K - 1} \rho_k
		\leq \Phi^N (\mathbf z_{\bar{k}}) - \Phi_{\rm low}^N
		+ \frac{\kappa_m}{\sigma_0} S_\infty,
	\end{equation*}
	which further implies that
	\begin{equation*}
		\sigma_{K} \leq \sigma_{\bar{k}} + 4 \left( \Phi^N (\mathbf z_{\bar{k}}) - \Phi_{\rm low}^N + \frac{\kappa_m}{\sigma_0} S_\infty \right),
	\end{equation*}
	for all $K \geq \bar{k}$. 
	Moreover, it holds that $\sigma_K < 2 L_\Phi^N$ for all $K < \bar{k}$.
	Consequently, we have
	\begin{equation*}
		\sigma_k \leq \Sigma^N := \max \left\{ 2 L_\Phi^N, \; \sigma_{\bar{k}} + 4 \left( \Phi^N (\mathbf z_{\bar{k}}) - \Phi_{\rm low}^N + \frac{\kappa_m}{\sigma_0} S_\infty \right) \right\},
	\end{equation*}
	for all $k \geq 0$. 
	The proof is completed. 
\end{proof}

\begin{theorem}
	\label{thm:C:constrained-residual-complexity}
	For all $K \geq 0$, it holds that
	\begin{equation}
		\label{eq:C:theta-complexity}
		\min_{0 \le t \le K} \Theta_{\mathcal X}^N (\mathbf z_{t+1})
		\le
		C^N \Sigma^N \sqrt{\frac{\Sigma^N - \sigma_0}{\sigma_0 (K + 1)}},
	\end{equation}
	where $C^N := 1 + L_G^N / \sigma_0$ and $L_G^N := (\sum_{i = 1}^m (L_i^N)^2)^{1/2}$. 
	Consequently, Algorithm~\ref{alg:projected-SAA-RMGDA} is capable of finding an iterate $\mathbf z_{t+1}$ that satisfies $\Theta_{\mathcal X}^N(\mathbf z_{t+1})\le\varepsilon$ after at most $\mathcal O (\varepsilon^{-2})$ iterations. 
\end{theorem}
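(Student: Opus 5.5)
The plan has three steps: bound the residual at $\mathbf z_{t+1}$ by a multiple of $\|\mathbf p_t\|$ using the normal-cone certificate, then sum the step lengths through the $\sigma$-update, and finally take a minimum over $t$.

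\textbf{Step 1 (residual via the certificate).} Apply \eqref{eq:normal-certificate-subproblem} with $\mathbf z=\mathbf z_t$, $H=G_t^N$, $\sigma=\sigma_t$, $\rho=\rho_t$ and $\lambda=\lambda_t\in\Delta_m$. This gives
\begin{equation*}
-G^N(\mathbf z_t)\lambda_t-\sigma_t\mathbf p_t\in\mathcal N_{\mathcal X}(\mathbf z_{t+1}).
\end{equation*}
Hence the vector $G^N(\mathbf z_{t+1})\lambda_t+\big(-G^N(\mathbf z_t)\lambda_t-\sigma_t\mathbf p_t\big)$ lies in $G^N(\mathbf z_{t+1})\Delta_m+\mathcal N_{\mathcal X}(\mathbf z_{t+1})$. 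It follows that
\begin{equation*}
\Theta^N_{\mathcal X}(\mathbf z_{t+1})\le \|(G^N(\mathbf z_{t+1})-G^N(\mathbf z_t))\lambda_t\|+\sigma_t\|\mathbf p_t\|.
\end{equation*}
For the first term, use Cauchy--Schwarz: $\|\sum_i\lambda_{t,i}(\nabla F_i^N(\mathbf z_{t+1})-\nabla F_i^N(\mathbf z_t))\|\le \sum_i\lambda_{t,i}L_i^N\|\mathbf p_t\|\le\|\lambda_t\|\,L_G^N\|\mathbf p_t\|\le L_G^N\|\mathbf p_t\|$, since $\|\lambda_t\|\le1$ on the simplex. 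Because $\sigma_t\ge\sigma_0$, we can write $L_G^N\le (L_G^N/\sigma_0)\sigma_t$. This yields
\begin{equation*}
\Theta^N_{\mathcal X}(\mathbf z_{t+1})\le (\sigma_t+L_G^N)\|\mathbf p_t\|\le C^N\sigma_t\|\mathbf p_t\|.
\end{equation*}
This is the key step. No function values are needed, only gradient Lipschitz continuity of the SAA objectives, which holds with probability one under Assumption~\ref{ass:standing}.

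\textbf{Step 2 (summing step lengths).} The update rule $\sigma_{t+1}-\sigma_t=\sigma_t\|\mathbf p_t\|^2$ gives $\sigma_t^2\|\mathbf p_t\|^2=\sigma_t(\sigma_{t+1}-\sigma_t)\le\Sigma^N(\sigma_{t+1}-\sigma_t)$, where $\Sigma^N$ is the bound from Lemma~\ref{lem:bd-sigma}. Telescoping over $t=0,\dots,K$ and using Lemma~\ref{lem:bd-sigma} again,
\begin{equation*}
\sum_{t=0}^K\sigma_t^2\|\mathbf p_t\|^2\le\Sigma^N(\sigma_{K+1}-\sigma_0)\le\Sigma^N(\Sigma^N-\sigma_0).
\end{equation*}

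\textbf{Step 3 (minimum over $t$).} The minimum over $t$ of $\sigma_t\|\mathbf p_t\|$ is at most the root-mean-square, so
\begin{equation*}
\min_{0\le t\le K}\sigma_t\|\mathbf p_t\|\le\sqrt{\frac{\Sigma^N(\Sigma^N-\sigma_0)}{K+1}}.
\end{equation*}
Since $\Sigma^N\ge\sigma_0$, we have $\sqrt{\Sigma^N}\le\Sigma^N/\sqrt{\sigma_0}$, which puts this bound in the stated form $\Sigma^N\sqrt{(\Sigma^N-\sigma_0)/(\sigma_0(K+1))}$. Combining with the estimate from Step 1 gives \eqref{eq:C:theta-complexity}.

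The $\mathcal O(\varepsilon^{-2})$ iteration count then follows by making the right-hand side of \eqref{eq:C:theta-complexity} at most $\varepsilon$, i.e., by taking $K+1\ge (C^N\Sigma^N)^2(\Sigma^N-\sigma_0)/(\sigma_0\varepsilon^2)$.

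The main obstacle is Step 1: one has to identify the correct element of $G^N(\mathbf z_{t+1})\Delta_m+\mathcal N_{\mathcal X}(\mathbf z_{t+1})$ by reusing $\lambda_t$ at the new point, and to control the gradient mismatch with $L_G^N$ rather than with the $\hat\lambda$-dependent constant $L_\Phi^N$. The rest is routine bookkeeping.
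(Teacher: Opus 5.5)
Your proposal is correct and follows essentially the same route as the paper: you use the normal-cone certificate at $\mathbf z_{t+1}$ with $\lambda_t$ reused, bound $\Theta_{\mathcal X}^N(\mathbf z_{t+1}) \le (L_G^N+\sigma_t)\|\mathbf p_t\| \le C^N\sigma_t\|\mathbf p_t\|$, telescope the $\sigma$-update, and bound the minimum by the root-mean-square. The only difference is bookkeeping: you telescope $\sigma_t^2\|\mathbf p_t\|^2=\sigma_t(\sigma_{t+1}-\sigma_t)\le\Sigma^N(\sigma_{t+1}-\sigma_t)$ directly, which gives the slightly sharper intermediate bound $C^N\sqrt{\Sigma^N(\Sigma^N-\sigma_0)/(K+1)}$ before you relax it, whereas the paper first replaces $\sigma_t$ by $\Sigma^N$ and then uses $\sum_{t}\|\mathbf p_t\|^2\le(\Sigma^N-\sigma_0)/\sigma_0$.
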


\begin{proof}
	Since $\sigma_{K + 1} - \sigma_0 = \sum_{k = 0}^K (\sigma_{k + 1} - \sigma_k) = \sum_{k = 0}^K \sigma_k \|\mathbf p_k\|^2 \geq \sigma_0 \sum_{k = 0}^K \|\mathbf p_k\|^2$, we have
	\begin{equation}
		\label{eq:sum-pk}
		\sum_{k = 0}^K \left\| \mathbf p_k \right\|^2
		\leq \frac{\sigma_{K + 1} - \sigma_0}{\sigma_0}
		\leq \frac{\Sigma^N - \sigma_0}{\sigma_0},
	\end{equation}
	for all $K \geq 0$. 
	Applying the optimality condition~\eqref{eq:normal-certificate-subproblem} with $H=G^N(\mathbf z_k)$, $\rho=\rho_k$, and $\sigma=\sigma_k$ yields that
	\begin{equation}
		\label{eq:C:normal-certificate}
		-G^N(\mathbf z_k)\lambda_k-\sigma_k\mathbf p_k
		\in\mathcal N_{\mathcal X}(\mathbf z_{k+1}),
	\end{equation}
	which provides a feasible normal-cone certificate for $\Theta_{\mathcal X}^N(\mathbf z_{k+1})$. 
	Since $\|\lambda_k\|\le1$ and $G^N$ is $L_G^N$-Lipschitz in Frobenius norm, we have
	\begin{equation*}
		\Theta_{\mathcal X}^N(\mathbf z_{k+1})
		\le
		\left\|
		\bigl(G^N(\mathbf z_{k+1})-G^N(\mathbf z_k)\bigr)\lambda_k
		-\sigma_k\mathbf p_k
		\right\| 
		\le
		\bigl(L_G^N+\sigma_k\bigr)\|\mathbf p_k\|
		\le
		\left(1+\frac{L_G^N}{\sigma_0}\right) \sigma_k\|\mathbf p_k\|
		\leq C^N \Sigma^N \|\mathbf p_k\|.
	\end{equation*}
	Then it can be readily verified that
	\begin{equation*}
		\min_{0 \leq t \leq K} \left( \Theta_{\mathcal X}^N (\mathbf z_{t + 1}) \right)^2
		\leq 
		\frac{1}{K + 1} \sum_{t = 0}^K \left( \Theta_{\mathcal X}^N (\mathbf z_{t + 1}) \right)^2
		\leq
		\frac{(C^N \Sigma^N)^2}{K + 1} \sum_{t = 0}^K \left\|\mathbf p_t\right\|^2
		\leq 
		\frac{(C^N \Sigma^N)^2 (\Sigma^N - \sigma_0)}{\sigma_0 (K + 1)},
	\end{equation*}
	which proves~\eqref{eq:C:theta-complexity}.
\end{proof}


The $O (\varepsilon^{-2})$ complexity bound is established for the nonconvex setting. 
In the convex case, however, one can generally expect a more favorable complexity guarantee. 
For instance, the accelerated techniques developed in \cite{zhao2023accelerated,zhao2024accelerated} could potentially be incorporated to further improve the convergence rate. 
A systematic investigation along this direction is left for future work.

\begin{theorem}
	\label{thm:C:constrained-stationarity}
	The sequence $\{\mathbf z_k\}$ generated by Algorithm~\ref{alg:projected-SAA-RMGDA} has at least one accumulation point. 
	Moreover, every accumulation point of $\{\mathbf z_k\}$ is Pareto stationary for the SAA problem~\eqref{pb:det_moo}. 
\end{theorem}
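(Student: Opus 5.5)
The proof splits into two parts: showing the iterates stay in a bounded set, and then passing to the limit in the normal-cone certificate \eqref{eq:C:normal-certificate}.

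\textbf{Boundedness.} I would sum \eqref{eq:C:potential-descent} from $0$ to $K-1$ and use Lemma~\ref{lem:bd-sigma} together with \eqref{eq:sum-pk}. The negative part of the right-hand side is $\frac12 L_\Phi^N\|\mathbf p_k\|^2$ (ignoring $\sigma_k\|\mathbf p_k\|^2\ge0$) plus $\kappa_m\rho_k/\sigma_0$. This gives
\begin{equation*}
\Phi^N(\mathbf z_K)\le \Phi^N(\mathbf z_0)+\frac{L_\Phi^N}{2}\cdot\frac{\Sigma^N-\sigma_0}{\sigma_0}+\frac{\kappa_m}{\sigma_0}S_\infty
\end{equation*}
for all $K$. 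Hence every $\mathbf z_K$ lies in a fixed sublevel set of $\Phi^N$ on $\mathcal X$. This set is bounded because $\Phi^N$ is level-bounded; that follows from Assumption~\ref{ass:standing}(iv), since each $F_i^N$ is bounded below and level-bounded and the weights $\hat\lambda_i$ are nonnegative with at least one positive. The set is also closed, by continuity of $\Phi^N$ and closedness of $\mathcal X$, so it is compact. Bolzano--Weierstrass then gives an accumulation point, and it lies in $\mathcal X$ because $\mathcal X$ is closed.

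\textbf{Stationarity of accumulation points.} Let $\mathbf z_{k_j}\to\bar{\mathbf z}$. Since $\sum_k\|\mathbf p_k\|^2<\infty$ by \eqref{eq:sum-pk}, we have $\mathbf p_k\to0$, and therefore $\mathbf z_{k_j+1}=\mathbf z_{k_j}+\mathbf p_{k_j}\to\bar{\mathbf z}$ as well. I would pass to a further subsequence along which $\lambda_{k_j}\to\bar\lambda\in\Delta_m$, using compactness of $\Delta_m$. By \eqref{eq:C:normal-certificate},
\begin{equation*}
v_j:=-G^N(\mathbf z_{k_j})\lambda_{k_j}-\sigma_{k_j}\mathbf p_{k_j}\in\mathcal N_{\mathcal X}(\mathbf z_{k_j+1}).
\end{equation*}
We have $\sigma_{k_j}\le\Sigma^N$ and $\mathbf p_{k_j}\to0$, and $G^N$ is continuous. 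Hence $v_j\to -G^N(\bar{\mathbf z})\bar\lambda$.

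\textbf{Closing the limit (the main obstacle).} The delicate step is that the base points of the normal cones move. Since $\mathcal X$ is convex, the graph of $\mathcal N_{\mathcal X}$ is closed. Concretely, for any $\mathbf y\in\mathcal X$ we have $v_j^\top(\mathbf y-\mathbf z_{k_j+1})\le 0$, and letting $j\to\infty$ gives $-(G^N(\bar{\mathbf z})\bar\lambda)^\top(\mathbf y-\bar{\mathbf z})\le0$. Thus $-G^N(\bar{\mathbf z})\bar\lambda\in\mathcal N_{\mathcal X}(\bar{\mathbf z})$, i.e.\ $0\in G^N(\bar{\mathbf z})\Delta_m+\mathcal N_{\mathcal X}(\bar{\mathbf z})$, which is Pareto stationarity for \eqref{pb:det_moo}.

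Alternatively, $\Theta^N_{\mathcal X}(\mathbf z_{k+1})\le C^N\Sigma^N\|\mathbf p_k\|\to0$ from the proof of Theorem~\ref{thm:C:constrained-residual-complexity}. Combined with lower semicontinuity of $\Theta^N_{\mathcal X}$, this gives the same conclusion. However, that lower semicontinuity itself rests on the closed-graph argument above, so I would present the direct version.
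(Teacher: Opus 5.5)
Your proposal is correct and follows essentially the same route as the paper. It sums the descent inequality \eqref{eq:C:potential-descent}, using \eqref{eq:sum-pk} and $S_\infty$, to confine the iterates to a compact sublevel set of $\Phi^N$, and then passes to the limit in the normal-cone certificate \eqref{eq:C:normal-certificate} along a subsequence with $\lambda_{k_j}\to\bar\lambda$, invoking closedness of the graph of $\mathcal N_{\mathcal X}$. Your additional details, namely level-boundedness of $\Phi^N$ via an index with $\hat\lambda_{i_0}>0$ and the explicit inequality verifying the closed-graph step, are valid and simply make explicit what the paper asserts without proof.
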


\begin{proof}
	According to \eqref{eq:C:potential-descent}, we have
	\begin{equation*}
		\Phi^N (\mathbf z_{k + 1}) - \Phi^N (\mathbf z_k)
		\leq
		\frac{1}{2} \left(L_\Phi^N - \sigma_k\right)  \|\mathbf p_k\|^2
		+ \frac{\kappa_m}{\sigma_0} \rho_k
		\leq
		\frac{L_\Phi^N}{2} \|\mathbf p_k\|^2
		+ \frac{\kappa_m}{\sigma_0} \rho_k.
	\end{equation*}
	Summing the above relationship over $k$ from $0$ to $K - 1$ yields that
	\begin{equation*}
		\Phi^N (\mathbf z_K)
		\leq
		\Phi^N (\mathbf z_0)
		+ \frac{L_\Phi^N}{2} \sum_{k = 0}^{K - 1} \|\mathbf p_k\|^2
		+ \frac{\kappa_m}{\sigma_0} \sum_{k = 0}^{K - 1} \rho_k
		\leq
		\Phi^N (\mathbf z_0)
		+ \frac{L_\Phi^N (\Sigma^N - \sigma_0)}{2 \sigma_0}
		+ \frac{\kappa_m}{\sigma_0} S_\infty
		=: C,
	\end{equation*}
	where the last inequality follows from \eqref{eq:sum-pk} and the definition of $S_\infty$. 
	Hence, all iterates $\{\mathbf z_k\}$ generated by Algorithm~\ref{alg:projected-SAA-RMGDA} belong to the level set $\{\mathbf z\in\mathcal X:\Phi^N(\mathbf z)\le C\}$, which is compact as $\Phi^N$ is continuous and level-bounded on the closed set $\mathcal X$. 
	Then we know that $\{\mathbf z_k\}$ has at least one accumulation point.

	Let $\bar{\mathbf z}$ be an accumulation point of $\{\mathbf z_k\}$. 
	Upon taking $K \to \infty$ in \eqref{eq:sum-pk}, we arrive at $\sum_{k = 0}^\infty \|\mathbf p_k\|^2 < \infty$, which implies that $\mathbf p_k \to 0$ as $k \to \infty$. 
	Let $\{\mathbf z_{k_j}\}$ be the subsequence of $\{\mathbf z_k\}$ that converges to $\bar{\mathbf z}$. 
	Then we have $\mathbf z_{k_j + 1} = \mathbf z_{k_j} + \mathbf p_{k_j} \to \bar{\mathbf z}$ as $j \to \infty$.
	By the compactness of $\Delta_m$, after passing to a further subsequence, we assume that $\lambda_{k_j} \to \bar\lambda \in \Delta_m$ as $j \to \infty$.
	From~\eqref{eq:C:normal-certificate}, we can obtain that
	\begin{equation}
		\label{eq:normal-kj}
		-G^N(\mathbf z_{k_j})\lambda_{k_j}-\sigma_{k_j}\mathbf p_{k_j}
		\in\mathcal N_{\mathcal X}(\mathbf z_{k_j+1}).
	\end{equation}
	Lemma~\ref{lem:bd-sigma} guarantees that $\{\sigma_k\}$ is bounded. 
	Since $\mathbf p_{k_j} \to 0$ and $G^N$ is continuous, the left-hand side of \eqref{eq:normal-kj} converges to $-G^N(\bar{\mathbf z})\bar\lambda$ as $j \to \infty$. 
	The graph of the normal-cone mapping to a closed convex set is closed. 
	Hence, we have
	\begin{equation*}
		-G^N(\bar{\mathbf z})\bar\lambda
		\in\mathcal N_{\mathcal X}(\bar{\mathbf z}),
	\end{equation*}
	which indicates that $\bar{\mathbf z}$ is Pareto stationary for the SAA problem~\eqref{pb:det_moo}. 
\end{proof}

\section{Consistency of SAA}

\label{sec:D:consistency}

The consistency of SAA problems and of their stationary points has been extensively studied in stochastic programming and stochastic generalized equations \cite{ShapiroXu2007,ralph2011convergence,FliegeXu2011,shapiro2021lectures}. 
In this section, we specialize these consistency results to the Pareto-stationarity residual used in our analysis. 
We show that every accumulation point of increasingly accurate SAA-stationary outputs is Pareto stationary for problem~\eqref{pb:multi}, and derive an explicit finite-sample residual bound.

\subsection{Population Pareto stationarity}

\label{sec:D:population-consistency}

We first compare the SAA and population residuals through their gradient matrices.
Recall the auxiliary residual $\vartheta_{\mathcal X}$ from Subsection~\ref{sec:A:constr-stat}, for which
\begin{equation*}
	\Theta_{\mathcal X}(\mathbf z)=\vartheta_{\mathcal X}(\mathbf z;G(\mathbf z)),
	\qquad
	\Theta_{\mathcal X}^N(\mathbf z)=\vartheta_{\mathcal X}(\mathbf z;G^N(\mathbf z)).
\end{equation*}

\begin{lemma}
	\label{lem:D:thetaX-lip-H}
	For every $\mathbf z\in\mathcal X$ and every $H,\widehat H\in\mathbb R^{n\times m}$, we have
	\begin{equation*}
		\bigl|\vartheta_{\mathcal X}(\mathbf z;\widehat H)
		-\vartheta_{\mathcal X}(\mathbf z;H)\bigr|
		\le\|\widehat H-H\|_F.
	\end{equation*}
\end{lemma}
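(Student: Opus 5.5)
By symmetry in $H$ and $\widehat H$, it suffices to prove the one-sided bound
\begin{equation*}
\vartheta_{\mathcal X}(\mathbf z;\widehat H)\le\vartheta_{\mathcal X}(\mathbf z;H)+\|\widehat H-H\|_F .
\end{equation*}
Exchanging the roles of $H$ and $\widehat H$ then gives the reverse inequality, and the absolute-value statement follows.

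First we note that the distance is attained, or at least nearly attained. The set $H\Delta_m+\mathcal N_{\mathcal X}(\mathbf z)$ is the sum of a compact convex set and a closed convex cone, so it is closed and convex. It is also nonempty, because $0\in\mathcal N_{\mathcal X}(\mathbf z)$. Hence there exist $\lambda\in\Delta_m$ and $v\in\mathcal N_{\mathcal X}(\mathbf z)$ with
\begin{equation*}
\|H\lambda+v\|=\vartheta_{\mathcal X}(\mathbf z;H).
\end{equation*}
If one prefers to avoid the closedness claim, one can work with $\varepsilon$-minimizers and let $\varepsilon\downarrow0$ at the end.

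The key step is to reuse the same pair $(\lambda,v)$ as a feasible certificate for $\widehat H$. Since $\widehat H\lambda+v\in\widehat H\Delta_m+\mathcal N_{\mathcal X}(\mathbf z)$,
\begin{equation*}
\vartheta_{\mathcal X}(\mathbf z;\widehat H)\le\|\widehat H\lambda+v\|\le\|H\lambda+v\|+\|(\widehat H-H)\lambda\|.
\end{equation*}
It remains to bound the last term. We have $\|(\widehat H-H)\lambda\|\le\|\widehat H-H\|_2\|\lambda\|\le\|\widehat H-H\|_F\|\lambda\|$. For $\lambda\in\Delta_m$, $\|\lambda\|\le\|\lambda\|_1=1$, so the last term is at most $\|\widehat H-H\|_F$. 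This gives the one-sided bound.

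No step is a real obstacle. The only point needing care is whether the infimum defining $\vartheta_{\mathcal X}$ is attained, and the $\varepsilon$-argument removes any dependence on that. The normal cone does not change, because $\mathbf z$ is fixed, so the perturbation affects only the gradient part.
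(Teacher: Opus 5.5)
Your proposal is correct and follows the paper's argument: bound $\|\widehat H\lambda+v\|\le\|H\lambda+v\|+\|\widehat H-H\|_F\|\lambda\|$ with $\|\lambda\|\le1$, take the infimum over $(\lambda,v)$, and swap $H$ and $\widehat H$. Your discussion of attainment is harmless but unnecessary, since the paper takes the infimum directly in the pointwise inequality and so never needs closedness of $H\Delta_m+\mathcal N_{\mathcal X}(\mathbf z)$.
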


\begin{proof}
	For every $\lambda\in\Delta_m$ and $v\in\mathcal N_{\mathcal X}(\mathbf z)$,
	\begin{equation*}
		\|\widehat H\lambda+v\|
		\le
		\|H\lambda+v\|+\|\widehat H-H\|_F\|\lambda\|
		\le
		\|H\lambda+v\|+\|\widehat H-H\|_F.
	\end{equation*}
	Taking the infimum over $(\lambda,v)$ and then interchanging $H$ and $\widehat H$ proves the claim.
\end{proof}

\begin{theorem}
	\label{thm:D:thetaX-uniform}
	Under Assumption~\ref{ass:standing}, for every nonempty compact set $\mathcal K\subseteq\mathcal X$, we have
	\begin{equation*}
		\sup_{\mathbf z\in\mathcal K}
		\bigl|\Theta_{\mathcal X}^N(\mathbf z)-\Theta_{\mathcal X}(\mathbf z)\bigr|
		\longrightarrow0,
		\quad\text{as }N\to\infty,
	\end{equation*}
	with probability one. 
\end{theorem}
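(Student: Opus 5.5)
By Lemma~\ref{lem:D:thetaX-lip-H}, applied pointwise with $H=G(\mathbf z)$ and $\widehat H=G^N(\mathbf z)$, we have for every $\mathbf z\in\mathcal K$
\begin{equation*}
	\bigl|\Theta_{\mathcal X}^N(\mathbf z)-\Theta_{\mathcal X}(\mathbf z)\bigr|
	\le \|G^N(\mathbf z)-G(\mathbf z)\|_F
	\le \sum_{i=1}^m \|\nabla F_i^N(\mathbf z)-\nabla F_i(\mathbf z)\|.
\end{equation*}
The normal cone, which carries all the geometry of $\mathcal X$, therefore drops out. The theorem reduces to a uniform strong law of large numbers for the gradients: for each $i\in[m]$ it suffices to show that, with probability one,
\begin{equation*}
	\sup_{\mathbf z\in\mathcal K}\|\nabla F_i^N(\mathbf z)-\nabla F_i(\mathbf z)\|\to0 .
\end{equation*}
A finite union over $i$ then preserves the probability-one statement.

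First I record that the gradient can be moved inside the expectation on $\mathcal K$, i.e.\ $\nabla F_i(\mathbf z)=\mathbb E[\nabla f_i(\mathbf z,\boldsymbol\xi)]$. This follows from dominated convergence using the integrable envelope $M_{i,S}$ of Assumption~\ref{ass:standing}(iii), taken on a compact neighbourhood of $\mathcal K$ in $\mathcal X$ (or a convex hull argument along segments in $\mathcal K$). Alternatively, I can use that $\nabla f_i(\cdot,\xi)$ is $L_i(\xi)$-Lipschitz with $L_i$ integrable. Either way, the map $\mathbf z\mapsto\nabla f_i(\mathbf z,\xi)$ is continuous and dominated on $\mathcal K$ by $M_{i,\mathcal K}$. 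This is exactly the setting of the classical uniform law of large numbers for random continuous functions on a compact set (e.g.\ \citealp{shapiro2021lectures}, Theorem~7.48), applied componentwise. That theorem gives both continuity of $\nabla F_i$ on $\mathcal K$ and almost sure uniform convergence.

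To keep the argument self-contained I would instead give the direct $\epsilon$-net proof, which exploits the Lipschitz structure. Fix $\epsilon>0$ and take a finite $\delta$-net $\{\mathbf z^1,\dots,\mathbf z^J\}\subseteq\mathcal K$. For $\mathbf z\in\mathcal K$ with $\|\mathbf z-\mathbf z^j\|\le\delta$, I split the error as
\begin{equation*}
	\|\nabla F_i^N(\mathbf z)-\nabla F_i(\mathbf z)\|
	\le L_i^N\delta+\|\nabla F_i^N(\mathbf z^j)-\nabla F_i(\mathbf z^j)\|+L_i\delta .
\end{equation*}
By the strong law, $L_i^N\to L_i$ almost surely, and $\nabla F_i^N(\mathbf z^j)\to\nabla F_i(\mathbf z^j)$ almost surely for each of the finitely many $j$; the latter uses integrability from $M_{i,\mathcal K}$. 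Hence the $\limsup$ of the supremum is at most $2L_i\delta$ almost surely. Choosing $\delta=\delta_r\downarrow0$ along a countable sequence and intersecting the countably many full-measure events yields the conclusion.

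The main obstacle is measure-theoretic bookkeeping rather than any estimate. The exceptional null set must not depend on $\mathbf z$, which is handled by the countable nets. The Lipschitz property in Assumption~\ref{ass:standing}(ii) holds only for a.e.\ $\xi$, which is also absorbed into a single null set. Finally, the interchange of gradient and expectation should be justified at boundary points of $\mathcal X$; I would handle this via the fundamental theorem of calculus along segments inside the open set where $f_i(\cdot,\xi)$ is $C^1$.
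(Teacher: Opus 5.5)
Your proposal is correct and follows the paper's proof. The paper also uses Lemma~\ref{lem:D:thetaX-lip-H} to reduce the claim to $\sup_{\mathbf z\in\mathcal K}\|G^N(\mathbf z)-G(\mathbf z)\|_F\to0$, and it obtains this componentwise by citing the compact-uniform strong law of large numbers. Your $\epsilon$-net argument is a valid self-contained replacement for that citation: it combines the pointwise strong law at finitely many net points with $L_i^N\to L_i$ a.s. from Assumption~\ref{ass:standing}(ii), and lets $\delta_r\downarrow0$ along a countable sequence.
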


\begin{proof}
	For each $i\in[m]$, the compact-uniform strong law of large numbers for random functions \cite{ShapiroXu2007,shapiro2021lectures}, together with Assumption~\ref{ass:standing}(ii)--(iii), gives
	\begin{equation*}
		\sup_{\mathbf z\in\mathcal K}
		\|\nabla F_i^N(\mathbf z)-\nabla F_i(\mathbf z)\|
		\longrightarrow0,
		\quad\text{as } N \to \infty,
	\end{equation*}
	with probability one. 
	Since $m$ is finite,
	\begin{equation*}
		\sup_{\mathbf z\in\mathcal K}
		\|G^N(\mathbf z)-G(\mathbf z)\|_F
		\longrightarrow0,
		\quad\text{as } N \to \infty,
	\end{equation*}
	with probability one. 
	Lemma~\ref{lem:D:thetaX-lip-H} then yields
	\begin{equation*}
		\sup_{\mathbf z\in\mathcal K}
		\bigl|\Theta_{\mathcal X}^N(\mathbf z)-\Theta_{\mathcal X}(\mathbf z)\bigr|
		\le
		\sup_{\mathbf z\in\mathcal K}
		\|G^N(\mathbf z)-G(\mathbf z)\|_F,
	\end{equation*}
	which proves the result.
\end{proof}

Previous SAA analysis for stochastic multi-objective optimization primarily focuses on optimal or approximately optimal solutions \cite{FliegeXu2011}. 
Here we study consistency directly in terms of the Pareto-stationarity residual used by our algorithm.

\begin{theorem}
	\label{thm:D:population-stationarity}
	Let $\{\varepsilon_N\}$ be a sequence with $\varepsilon_N \downarrow 0$ as $N \to \infty$. 
	For each sample size $N$, we denote by $\widehat{\mathbf z}_N$ the output of Algorithm~\ref{alg:projected-SAA-RMGDA} such that
	\begin{equation}
		\label{eq:D:vanishing-SAA-residual}
		\Theta_{\mathcal X}^N(\widehat{\mathbf z}_N)\le\varepsilon_N.
	\end{equation}
	Then, with probability one, every accumulation point $\mathbf z^\star$ of $\{\widehat{\mathbf z}_N\}$ is Pareto stationary for problem~\eqref{pb:multi}. 
\end{theorem}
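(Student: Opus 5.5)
Fix a subsequence $\widehat{\mathbf z}_{N_j}\to\mathbf z^\star$ and pass the SAA residual bound \eqref{eq:D:vanishing-SAA-residual} to the limit. The argument has two parts: uniform closeness of $\Theta_{\mathcal X}^N$ and $\Theta_{\mathcal X}$ on a compact neighbourhood, and lower semicontinuity of $\Theta_{\mathcal X}$.

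First, the probability-one event. The accumulation point $\mathbf z^\star$ depends on the sample path, so we cannot apply Theorem~\ref{thm:D:thetaX-uniform} to one fixed compact set chosen after the fact. Instead, I would take the countable family of compact sets $\mathcal K_r:=\mathcal X\cap\{\|\mathbf z\|\le r\}$, $r\in\mathbb N$; those that are nonempty are compact subsets of $\mathcal X$. Theorem~\ref{thm:D:thetaX-uniform} applied to each of them, intersected over countably many $r$, gives a single event $\Omega_0$ of probability one. On $\Omega_0$ we have $\sup_{\mathcal K_r}|\Theta^N_{\mathcal X}-\Theta_{\mathcal X}|\to0$ for every $r$. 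Since this is really a statement about uniform convergence of gradients, I would also record $\sup_{\mathcal K_r}\|G^N-G\|_F\to0$ on $\Omega_0$.

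Next, fix $\omega\in\Omega_0$ and a subsequence $\widehat{\mathbf z}_{N_j}\to\mathbf z^\star$. Then $\mathbf z^\star\in\mathcal X$ because $\mathcal X$ is closed. Choose $r$ with $\|\mathbf z^\star\|<r$, so that $\widehat{\mathbf z}_{N_j}\in\mathcal K_r$ for all large $j$. This gives
\[
\Theta_{\mathcal X}(\widehat{\mathbf z}_{N_j})\le \varepsilon_{N_j}+\sup_{\mathcal K_r}|\Theta^{N_j}_{\mathcal X}-\Theta_{\mathcal X}|\to0 .
\]

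The main obstacle is concluding $\Theta_{\mathcal X}(\mathbf z^\star)=0$. $\Theta_{\mathcal X}$ need not be continuous, because $\mathcal N_{\mathcal X}$ is only outer semicontinuous, so I would argue directly with certificates, as in the proof of Theorem~\ref{thm:C:constrained-stationarity}. For each $j$, pick $\lambda_j\in\Delta_m$ and $v_j\in\mathcal N_{\mathcal X}(\widehat{\mathbf z}_{N_j})$ with
\[
\|G(\widehat{\mathbf z}_{N_j})\lambda_j+v_j\|\le\Theta_{\mathcal X}(\widehat{\mathbf z}_{N_j})+1/j .
\]
Pass to a further subsequence with $\lambda_j\to\bar\lambda\in\Delta_m$, using compactness of $\Delta_m$. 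Since $G$ is continuous (Lipschitz by Assumption~\ref{ass:standing}), we get $G(\widehat{\mathbf z}_{N_j})\lambda_j\to G(\mathbf z^\star)\bar\lambda$, and hence $v_j\to-G(\mathbf z^\star)\bar\lambda$. The graph of $\mathcal N_{\mathcal X}$ is closed, so $-G(\mathbf z^\star)\bar\lambda\in\mathcal N_{\mathcal X}(\mathbf z^\star)$. This is exactly \eqref{eq:normal-cps}, so $\mathbf z^\star$ is Pareto stationary for problem~\eqref{pb:multi}.
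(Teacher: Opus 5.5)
Your proposal is correct and follows essentially the same route as the paper. You take a countable intersection over the compact sets $\mathcal K_r$ to get a single probability-one event, use the bound $\Theta_{\mathcal X}(\widehat{\mathbf z}_{N_j})\le\varepsilon_{N_j}+\sup_{\mathcal K_r}|\Theta^{N_j}_{\mathcal X}-\Theta_{\mathcal X}|\to0$, and pass near-optimal certificates $(\lambda_j,v_j)$ to the limit via compactness of $\Delta_m$ and closedness of the normal-cone graph. The only cosmetic difference is that you get $v_j\to-G(\mathbf z^\star)\bar\lambda$ directly from $G(\widehat{\mathbf z}_{N_j})\lambda_j+v_j\to0$, whereas the paper first shows $\{v_j\}$ is bounded and then extracts a convergent subsequence.
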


\begin{proof}
	Theorem~\ref{thm:C:constrained-residual-complexity} guarantees that Algorithm~\ref{alg:projected-SAA-RMGDA} is able to find an iterate $\widehat{\mathbf z}_N$ such that \eqref{eq:D:vanishing-SAA-residual} holds. 
	Let $\{\widehat{\mathbf z}_{N_j}\}$ be a subsequence of $\widehat{\mathbf z}_N$ converging to $\mathbf z^\star$. 
	Since the convergent subsequence is bounded, there exists a positive integer $r$ such that $\|\widehat{\mathbf z}_{N_j}\| \leq r$ for all $j$ and $\|\mathbf z^\star\| \leq r$. 
	Then we apply Theorem~\ref{thm:D:thetaX-uniform} to the compact set
	\begin{equation*}
		\mathcal K_r := \mathcal X \cap \{\mathbf z \in \mathbb R^n : \|\mathbf z\| \leq r\}.
	\end{equation*}
	Taking the countable intersection of the corresponding probability-one events, we may work on an event on which the asserted uniform convergence holds simultaneously for every nonempty $\mathcal K_r$. 
	Consequently, we have
	\begin{equation*}
		\Theta_{\mathcal X}(\widehat{\mathbf z}_{N_j})
		\le
		\Theta_{\mathcal X}^{N_j}(\widehat{\mathbf z}_{N_j})
		+
		\sup_{\mathbf z\in\mathcal K_r}
		\bigl|\Theta_{\mathcal X}^{N_j}(\mathbf z)-\Theta_{\mathcal X}(\mathbf z)\bigr|
		\longrightarrow0,
	\end{equation*}
	as $j \to \infty$. 
	Now we choose $\lambda_j\in\Delta_m$ and $v_j\in\mathcal N_{\mathcal X}(\widehat{\mathbf z}_{N_j})$ such that
	\begin{equation*}
		\|G(\widehat{\mathbf z}_{N_j})\lambda_j+v_j\|
		\le
		\Theta_{\mathcal X}(\widehat{\mathbf z}_{N_j})+j^{-1}.
	\end{equation*}
	Since $G$ is bounded on $\mathcal K_r$, the sequence $\{v_j\}$ is bounded.
	After passing to a further subsequence, we assume that $\lambda_j\to\lambda^\star\in\Delta_m$ and $v_j\to v^\star$ as $j \to \infty$. 
	Since the graph of the normal-cone mapping to the closed convex set $\mathcal X$ is closed, we have
	\begin{equation*}
		v^\star\in\mathcal N_{\mathcal X}(\mathbf z^\star).
	\end{equation*}
	Passing to the limit gives
	\begin{equation*}
		G(\mathbf z^\star)\lambda^\star+v^\star=0.
	\end{equation*}
	Therefore, $\mathbf z^\star$ is Pareto stationary for problem~\eqref{pb:multi}. 
\end{proof}

\begin{remark}
	Theorem~\ref{thm:D:population-stationarity} characterizes accumulation points but does not guarantee their existence.
	The existence of an accumulation point follows from any additional condition that makes the output sequence relatively compact.
	For example, this is immediate when $\mathcal X$ is compact.
	More generally, with the scalar function $\Phi^N$ defined in Section~\ref{sec:C:complexity}, suppose that, with probability one, there exist $N_0\in\mathbb N$, a constant $B_0<\infty$, and a lower semicontinuous level-bounded function $\underline\Phi:\mathcal X\to\mathbb R$ such that
	\begin{equation*}
		\underline\Phi(\mathbf z)\le\Phi^N(\mathbf z)\quad\forall\mathbf z\in\mathcal X,
		\qquad
		\Phi^N(\mathbf z_0)\le B_0,
		\qquad N\ge N_0.
	\end{equation*}
	Then Lemma~\ref{lem:C:constrained-descent} and $S_\infty = \sum_{k=0}^{\infty}\rho_k<\infty$ imply that every algorithmic output belongs to the compact set
	\begin{equation*}
		\left\{
		\mathbf z\in\mathcal X:
		\underline\Phi(\mathbf z)
		\le
		B_0+\frac{\kappa_m}{\sigma_0}S_\infty
		\right\}.
	\end{equation*}
	Thus $\{\widehat{\mathbf z}_N\}$ has at least one accumulation point, and Theorem~\ref{thm:D:population-stationarity} applies to each of them.
\end{remark}

\subsection{Finite-sample quantification on a compact set}

\label{sec:D:finiteN}

The preceding consistency result is asymptotic. 
Motivated by finite-sample analysis of SAA in stochastic programming and generalized equations \citep{xu2010uniform,Chen_Shapiro_Sun,Chen_shen}, we derive an explicit high-probability bound for the gradient approximation on a compact set and transfer it to the Pareto-stationarity residual. 
The argument uses a standard covering-number construction together with Hoeffding's inequality. 

\begin{assumption}
	\label{ass:finite-sample-gradient}
	Let $\mathcal K\subseteq\mathcal X$ be a nonempty compact set.
	For each $i\in[m]$, there exist constants $\bar M_{i,\mathcal K}<\infty$ and $L_{i,\mathcal K}<\infty$ such that, for $\mathbb P$-a.e. $\xi\in\Xi$,
	\begin{equation*}
		\sup_{\mathbf z\in\mathcal K}\|\nabla f_i(\mathbf z,\xi)\|
		\le\bar M_{i,\mathcal K},
	\end{equation*}
	and
	\begin{equation*}
		\|\nabla f_i(\mathbf z_1,\xi)-\nabla f_i(\mathbf z_2,\xi)\|
		\le L_{i,\mathcal K}\|\mathbf z_1-\mathbf z_2\|
		\quad\forall\mathbf z_1,\mathbf z_2\in\mathcal K.
	\end{equation*}
	Define
	\begin{equation*}
		\bar M_{\mathcal K}:=\max_{i\in[m]}\bar M_{i,\mathcal K},
		\qquad
		L_{\mathcal K}:=\max_{i\in[m]}L_{i,\mathcal K}.
	\end{equation*}
\end{assumption}

Assumption~\ref{ass:finite-sample-gradient} strengthens the integrable-envelope condition in Assumption~\ref{ass:standing} only for the finite-sample estimate. 
For $\eta>0$, let $\mathcal N(\mathcal K,\eta)$ denote the covering number of $\mathcal K$.

\begin{theorem}
	\label{thm:D:finite-uniform-G}
	Fix $\delta\in(0,1)$ and $\varepsilon_G>0$.
	Suppose Assumption~\ref{ass:finite-sample-gradient} holds and $L_{\mathcal K}>0$.
	Define
	\begin{equation*}
		\eta:=\frac{\varepsilon_G}{4L_{\mathcal K}\sqrt m}.
	\end{equation*}
	If
	\begin{equation}
		\label{eq:D:finiteN-G}
		N\ge
		\frac{32nm\,\bar M_{\mathcal K}^2}{\varepsilon_G^2}
		\left[
		\log\left(\frac{2nm}{\delta}\right)
		+\log\mathcal N(\mathcal K,\eta)
		\right],
	\end{equation}
	then
	\begin{equation*}
		\mathbb P\left(
		\sup_{\mathbf z\in\mathcal K}\|G^N(\mathbf z)-G(\mathbf z)\|_F
		\le\varepsilon_G
		\right)\ge1-\delta.
	\end{equation*}
\end{theorem}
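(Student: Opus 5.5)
Standard covering plus Hoeffding plus union bound. Let $\{\mathbf y_1,\ldots,\mathbf y_J\}\subseteq\mathcal K$ be an $\eta$-net of $\mathcal K$ with $J=\mathcal N(\mathcal K,\eta)$; if the covering number is defined with centers not necessarily in $\mathcal K$, we pass to centers in $\mathcal K$ at the cost of a factor $2$ in $\eta$, which the slack in the constants absorbs. For any $\mathbf z\in\mathcal K$ pick $\mathbf y_l$ with $\|\mathbf z-\mathbf y_l\|\le\eta$ and split
\begin{equation*}
\|G^N(\mathbf z)-G(\mathbf z)\|_F\le\|G^N(\mathbf z)-G^N(\mathbf y_l)\|_F+\|G^N(\mathbf y_l)-G(\mathbf y_l)\|_F+\|G(\mathbf y_l)-G(\mathbf z)\|_F .
\end{equation*}
By Assumption~\ref{ass:finite-sample-gradient}, each column of $G^N$ and of $G$ is $L_{\mathcal K}$-Lipschitz on $\mathcal K$. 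For $G$ this uses differentiation under the expectation, justified by Assumption~\ref{ass:standing}(ii)--(iii) and dominated convergence. Hence the first and third terms are each at most $\sqrt m\,L_{\mathcal K}\eta=\varepsilon_G/4$, deterministically on a full-measure event. It therefore suffices to show that, with probability at least $1-\delta$, $\max_l\|G^N(\mathbf y_l)-G(\mathbf y_l)\|_F\le\varepsilon_G/2$.

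For the net points, work coordinatewise. For fixed $l$, $i\in[m]$, and coordinate $s\in[n]$, the scalars $[\nabla f_i(\mathbf y_l,\xi_j)]_s$, $j=1,\ldots,N$, are i.i.d., lie in $[-\bar M_{\mathcal K},\bar M_{\mathcal K}]$, and have mean $[\nabla F_i(\mathbf y_l)]_s$. Hoeffding's inequality with range $2\bar M_{\mathcal K}$ gives
\begin{equation*}
\mathbb P\Bigl(\bigl|[\nabla F_i^N(\mathbf y_l)-\nabla F_i(\mathbf y_l)]_s\bigr|>t\Bigr)\le2\exp\Bigl(-\frac{Nt^2}{2\bar M_{\mathcal K}^2}\Bigr).
\end{equation*}
If every one of the $nm$ entries deviates by at most $t$, then the Frobenius error is at most $\sqrt{nm}\,t$. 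Choose $t=\varepsilon_G/(2\sqrt{nm})$ and take a union bound over $nmJ$ events. The failure probability is then at most $2nmJ\exp(-N\varepsilon_G^2/(8nm\bar M_{\mathcal K}^2))$. This is at most $\delta$ once
\begin{equation*}
N\ge\frac{8nm\bar M_{\mathcal K}^2}{\varepsilon_G^2}\Bigl[\log\frac{2nm}{\delta}+\log\mathcal N(\mathcal K,\eta)\Bigr],
\end{equation*}
which is implied by \eqref{eq:D:finiteN-G}. The stated constant $32$ leaves room for a cruder Hoeffding variant or the factor-$2$ net adjustment. Combining the three pieces gives $\sup_{\mathbf z\in\mathcal K}\|G^N(\mathbf z)-G(\mathbf z)\|_F\le\varepsilon_G/4+\varepsilon_G/2+\varepsilon_G/4=\varepsilon_G$ on the good event.

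The main obstacle is bookkeeping rather than ideas. The delicate points are matching the Hoeffding constants and the union-bound count to the precise form of \eqref{eq:D:finiteN-G}, and justifying that the Lipschitz bounds hold almost surely uniformly in $N$. For the latter, the almost-sure statements in Assumption~\ref{ass:finite-sample-gradient} hold simultaneously for all samples on a countable intersection of full-measure events. A further small point is measurability of the supremum, which holds because continuity lets us restrict it to a countable dense subset of $\mathcal K$.
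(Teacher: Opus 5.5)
Your proposal is correct and is essentially the paper's proof: an $\eta$-net inside $\mathcal K$, coordinatewise Hoeffding with a union bound over $nm\,\mathcal N(\mathcal K,\eta)$ events, then Lipschitz interpolation. The paper aggregates per objective ($\varepsilon_G/(2\sqrt m)+2L_{\mathcal K}\eta=\varepsilon_G/\sqrt m$, then sums squares) rather than splitting at the Frobenius level. Its constant $32$ comes from bounding the centered coordinates by $2\bar M_{\mathcal K}$, whereas your sharper range $2\bar M_{\mathcal K}$ for the uncentered coordinates gives $8$.

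One aside is wrong, however: the slack in the sample-size constant cannot absorb passing from an external $\eta$-cover to a $2\eta$-net in $\mathcal K$. With $\eta$ fixed by the statement, the two Lipschitz terms would then each equal $\varepsilon_G/2$ and exhaust the whole budget, and a larger $N$ only shrinks the net-point term. So you genuinely need the net centers to lie in $\mathcal K$, as the paper's proof tacitly assumes.
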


\begin{proof}
	Let $\{\mathbf z^1,\ldots,\mathbf z^Q\}$ be an $\eta$-net of $\mathcal K$, where $Q=\mathcal N(\mathcal K,\eta)$.
	For every objective $i$, net point $r$, and coordinate $\ell$, the centered sample-gradient coordinate is bounded in absolute value by $2\bar M_{\mathcal K}$.
	Hoeffding's inequality \citep[Section~2.2]{high_dim_Prob}, followed by a union bound over $n m Q$ choices, shows that condition~\eqref{eq:D:finiteN-G} implies, with probability at least $1-\delta$,
	\begin{equation*}
		\max_{i\in[m]}\max_{1\le r\le Q}
		\|\nabla F_i^N(\mathbf z^r)-\nabla F_i(\mathbf z^r)\|
		\le\frac{\varepsilon_G}{2\sqrt m}.
	\end{equation*}
	For any $\mathbf z\in\mathcal K$, choose $\mathbf z^r$ with $\|\mathbf z-\mathbf z^r\|\le\eta$.
	The two Lipschitz bounds in Assumption~\ref{ass:finite-sample-gradient} then give
	\begin{equation*}
		\|\nabla F_i^N(\mathbf z)-\nabla F_i(\mathbf z)\|
		\le
		\frac{\varepsilon_G}{2\sqrt m}+2L_{\mathcal K}\eta
		=
		\frac{\varepsilon_G}{\sqrt m}.
	\end{equation*}
	Summing the squares over $i \in [m]$ proves the result.
\end{proof}

\begin{remark}
	If $L_{\mathcal K}=0$, the gradient mappings are constant on $\mathcal K$, and the same conclusion follows directly from the pointwise Hoeffding bound without a covering argument.
\end{remark}

\begin{corollary}
	\label{cor:D:finite-uniform-thetaX}
	Fix $\delta\in(0,1)$ and $\varepsilon>0$.
	Suppose Assumption~\ref{ass:finite-sample-gradient} holds.
	If $L_{\mathcal K}>0$, suppose that $N$ satisfies \eqref{eq:D:finiteN-G} with $\varepsilon_G=\varepsilon$.
	If $L_{\mathcal K}=0$, suppose instead that
	\begin{equation*}
		N\ge
		\frac{32nm\,\bar M_{\mathcal K}^2}{\varepsilon^2}
		\log\left(\frac{2nm}{\delta}\right).
	\end{equation*}
	Then
	\begin{equation*}
		\mathbb P\left(
		\sup_{\mathbf z\in\mathcal K}
		\bigl|\Theta_{\mathcal X}^N(\mathbf z)-\Theta_{\mathcal X}(\mathbf z)\bigr|
		\le\varepsilon
		\right)\ge1-\delta.
	\end{equation*}
\end{corollary}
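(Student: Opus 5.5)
The corollary follows by combining the uniform gradient bound of Theorem~\ref{thm:D:finite-uniform-G} with the pointwise Lipschitz estimate of Lemma~\ref{lem:D:thetaX-lip-H}. For every $\mathbf z\in\mathcal K$, applying Lemma~\ref{lem:D:thetaX-lip-H} with $H=G(\mathbf z)$ and $\widehat H=G^N(\mathbf z)$ gives
\begin{equation*}
	\bigl|\Theta_{\mathcal X}^N(\mathbf z)-\Theta_{\mathcal X}(\mathbf z)\bigr|
	\le\|G^N(\mathbf z)-G(\mathbf z)\|_F .
\end{equation*}
Taking the supremum over $\mathcal K$ yields the deterministic inclusion of events
\begin{equation*}
	\Bigl\{\sup_{\mathbf z\in\mathcal K}\|G^N(\mathbf z)-G(\mathbf z)\|_F\le\varepsilon\Bigr\}
	\subseteq
	\Bigl\{\sup_{\mathbf z\in\mathcal K}\bigl|\Theta_{\mathcal X}^N(\mathbf z)-\Theta_{\mathcal X}(\mathbf z)\bigr|\le\varepsilon\Bigr\}.
\end{equation*}
It therefore suffices to show that the left event has probability at least $1-\delta$.

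In the case $L_{\mathcal K}>0$, this is exactly Theorem~\ref{thm:D:finite-uniform-G} with $\varepsilon_G=\varepsilon$. The assumed sample-size condition is \eqref{eq:D:finiteN-G} with that choice, so no further work is needed.

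The case $L_{\mathcal K}=0$ is the only point that needs separate care. Here, for a.e.\ $\xi$, each $\nabla f_i(\cdot,\xi)$ is constant on $\mathcal K$. Hence $\nabla F_i^N$ is constant on $\mathcal K$ as well, and so is $\nabla F_i$, the latter by dominated convergence using the bound $\bar M_{i,\mathcal K}$. The supremum over $\mathcal K$ therefore reduces to the value at a single fixed point $\mathbf z^1\in\mathcal K$. I would then repeat the first step of the proof of Theorem~\ref{thm:D:finite-uniform-G} with $Q=1$.

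Each centered coordinate $\partial_\ell f_i(\mathbf z^1,\xi_j)-\partial_\ell F_i(\mathbf z^1)$ lies in $[-2\bar M_{\mathcal K},2\bar M_{\mathcal K}]$. Hoeffding's inequality then gives
\begin{equation*}
	\mathbb P\bigl(|\cdot|>t\bigr)\le 2\exp\bigl(-Nt^2/(8\bar M_{\mathcal K}^2)\bigr).
\end{equation*}
Choose $t=\varepsilon/\sqrt{nm}$, so that the coordinatewise bounds imply $\|G^N-G\|_F\le\varepsilon$. A union bound over the $nm$ coordinates gives failure probability at most $2nm\exp\bigl(-N\varepsilon^2/(8nm\bar M_{\mathcal K}^2)\bigr)$. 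This is at most $\delta$ whenever
\begin{equation*}
	N\ge 8nm\bar M_{\mathcal K}^2\varepsilon^{-2}\log(2nm/\delta),
\end{equation*}
which is implied by the stated condition with constant $32$. The main thing to check is that the constants in this Hoeffding computation are indeed dominated by the assumed bound, and the factor-of-four slack above settles that. Measurability of the supremum is handled in the same way as in Theorem~\ref{thm:D:finite-uniform-G}, for instance via continuity of the gradients and a countable dense subset of $\mathcal K$.
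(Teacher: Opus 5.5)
Your proposal is correct and follows the paper's own argument. For $L_{\mathcal K}>0$ it uses the event inclusion from Lemma~\ref{lem:D:thetaX-lip-H} together with Theorem~\ref{thm:D:finite-uniform-G}. For $L_{\mathcal K}=0$ it uses the constant-gradient reduction to a single-point Hoeffding bound, as in the paper's remark; your explicit requirement $N\ge 8nm\bar M_{\mathcal K}^2\varepsilon^{-2}\log(2nm/\delta)$ is correct and is implied by the stated condition, supplying a computation the paper leaves implicit.
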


\begin{proof}
	When $L_{\mathcal K}>0$, the result follows directly from Lemma~\ref{lem:D:thetaX-lip-H} and Theorem~\ref{thm:D:finite-uniform-G}.
	When $L_{\mathcal K}=0$, use the pointwise bound described in the preceding remark and then apply Lemma~\ref{lem:D:thetaX-lip-H}.
\end{proof}

\section{Numerical Experiments}

\label{sec:experiment}

In this section, we compare SAA-RMGDA with the stochastic multi-gradient method (SMG) of Liu and Vicente~\citep{SMGD-vincente}. 
The experiments cover unconstrained and constrained synthetic problems, binary classification, portfolio selection, multi-task molecular prediction, and robot control. 
They assess stationarity, feasibility, objective trade-offs, out-of-sample performance, and computational time.

\subsection{Implementation details}

Unless stated otherwise, the comparison includes SMG-adaptive with stepsize $\alpha_k=1/(k+1)$, SMG-Armijo with a multi-objective Armijo line-search procedure, and SAA-RMGDA from Algorithm~\ref{alg:projected-SAA-RMGDA}. 
All methods use the same initialization, training sample, and iteration budget within each experiment. 
The suffixes \texttt{exp} and \texttt{poly} identify exponential and polynomial Tikhonov schedules. 
The suffixes \texttt{PG} and \texttt{PD} identify the projected-gradient and primal-dual subproblem solvers in Remark~\ref{rem:compute-projected-cauchy}.

SAA-RMGDA updates $\sigma_k$ as prescribed by Algorithm~\ref{alg:projected-SAA-RMGDA} and uses one of the schedules
\begin{equation}
	\label{eq:rho}
	\rho_k=\frac{\rho_0}{(k+1)^{\beta_\rho}}
	\quad\text{or}\quad
	\rho_k=\rho_0\gamma^k,
\end{equation}
where $\rho_0>0$, $\beta_\rho>0$, and $\gamma\in(0,1)$.
The experiment-specific parameters are reported below.
The convergence theory requires $\sum_{k=0}^{\infty}\rho_k<\infty$, which holds for the exponential schedule and for polynomial schedules with $\beta_\rho>1$.
Cases with $\beta_\rho\le1$ are included only to study behavior outside this guarantee.

For unconstrained problems, we set
\begin{equation*}
	\mathbf d_k:=-G^N(\mathbf z_k)\lambda_k=\sigma_k\mathbf p_k,
\end{equation*}
where $\lambda_k$ is the computed multiplier. 
We use $\|\mathbf d_k\|$ as a stationarity surrogate.
For constrained problems, we report $\Theta_{\mathcal X}^N(\mathbf z_k)$ directly.
Final training objectives, problem-specific test metrics, and wall-clock time supplement these stationarity measures.

The experiments in Subsections~\ref{subsec:synthetic}--\ref{subsec:molecular} are carried out on a MacBook with Apple M1 8-core CPU and 8GB of memory. 
The continuous robot control experiment in Subsection~\ref{subsec:robot} is conducted on a MacBook with Apple M4 10-core CPU, 10-core GPU, and 24GB of memory.

\subsection{Synthetic stochastic problems}

\label{subsec:synthetic}

We begin with the FF1, MOP2, and MOP3 synthetic problems \citep{FF1}.
FF1 is unconstrained, while MOP2 and MOP3 have box constraints.
Stochasticity is introduced through an additive perturbation of the decision vector.
For
\begin{equation*}
	\phi(\mathbf z):=(\phi_1(\mathbf z),\phi_2(\mathbf z))^\top,
	\qquad \mathbf z\in\mathcal X\subseteq\mathbb R^d,
\end{equation*}
we draw $\xi\sim\mathrm{Unif}([-\delta_\xi,\delta_\xi]^d)$ and define
\begin{equation*}
	f_i(\mathbf z,\xi):=\phi_i(\mathbf z+\xi),
	\qquad
	F_i(\mathbf z):=\mathbb E[f_i(\mathbf z,\xi)],
	\qquad i=1,2.
\end{equation*}
Given i.i.d.\ samples $\{\xi_j\}_{j=1}^N$, the SAA objectives are
\begin{equation*}
	F_i^N(\mathbf z)
	=
	\frac1N\sum_{j=1}^N \phi_i(\mathbf z+\xi_j),
	\ i=1,2.
\end{equation*}

For FF1, we set $d=2$ and $\mathcal X=\mathbb R^2$ with
\begin{equation*}
	\begin{aligned}
		\phi_1(\mathbf z)
		&=1-\exp\left(-(z_1-1)^2-(z_2+1)^2\right),\\
		\phi_2(\mathbf z)
		&=1-\exp\left(-(z_1+1)^2-(z_2-1)^2\right).
	\end{aligned}
\end{equation*}
Their minimizers are $(1,-1)^\top$ and $(-1,1)^\top$, respectively, so the smooth nonconvex objectives induce a nontrivial trade-off.

For MOP2, we set $d=2$, $\mathcal X=[-4,4]^2$, and
\begin{equation*}
	c:=\frac1{\sqrt d}(1,\ldots,1)^\top.
\end{equation*}
The deterministic objectives are
\begin{equation*}
	\phi_1(\mathbf z)=1-\exp\bigl(-\|\mathbf z-c\|^2\bigr),
	\qquad
	\phi_2(\mathbf z)=1-\exp\bigl(-\|\mathbf z+c\|^2\bigr).
\end{equation*}

For MOP3, we set $d=2$ and $\mathcal X=[-\pi,\pi]^2$.
Define
\begin{equation*}
	\begin{aligned}
		A_1&=\frac12\sin(1)-2\cos(1)+\sin(2)-\frac32\cos(2),\\
		A_2&=\frac32\sin(1)-\cos(1)+2\sin(2)-\frac12\cos(2),
	\end{aligned}
\end{equation*}
and, for $\mathbf z=(z_1,z_2)^\top$,
\begin{equation*}
	\begin{aligned}
		B_1(\mathbf z)
		&=\frac12\sin(z_1)-2\cos(z_1)+\sin(z_2)-\frac32\cos(z_2),\\
		B_2(\mathbf z)
		&=\frac32\sin(z_1)-\cos(z_1)+2\sin(z_2)-\frac12\cos(z_2).
	\end{aligned}
\end{equation*}
The objectives are
\begin{equation*}
	\begin{aligned}
		\phi_1(\mathbf z)
		&=-1-\bigl(A_1-B_1(\mathbf z)\bigr)^2
		-\bigl(A_2-B_2(\mathbf z)\bigr)^2,\\
		\phi_2(\mathbf z)
		&=-(z_1+3)^2-(z_2+1)^2.
	\end{aligned}
\end{equation*}
Both MOP2 and MOP3 are smooth nonconvex stochastic biobjective problems with box constraints.

We report $\|\mathbf d_k\|$ for FF1 and the SAA residual $\Theta_{\mathcal X}^N(\mathbf z)$ from~\eqref{eq:A:thetaXN} for the box-constrained problems.
Their feasibility violation is measured by
$
\operatorname{dist}(\mathbf z,\mathcal X)
=
\|\mathbf z-\operatorname{Proj}_{\mathcal X}(\mathbf z)\|
=
\left\|
\max\{\mathbf l-\mathbf z,0\}+\max\{\mathbf z-\mathbf u,0\}
\right\|$ 
with $\mathcal X=[\mathbf l,\mathbf u]$,
where the maxima are taken componentwise.

All three problems use $N=10000$, $\delta_\xi=0.1$, the common initialization $\mathbf z_0=0$, and a common iteration budget.
For FF1, we test $\rho_0\in\{1,10^{-3}\}$ with $\beta_\rho=0.75$ and $\gamma=0.99$.
For MOP2 and MOP3, we use $\rho_0=0.1$ and $\gamma=0.99$.
The polynomial exponents are $0.51$ for MOP2 and $1.20$ for MOP3.

\begin{figure}[ht]
	\centering
	\begin{subfigure}{0.32\linewidth}
		\centering
		\includegraphics[width=\linewidth]{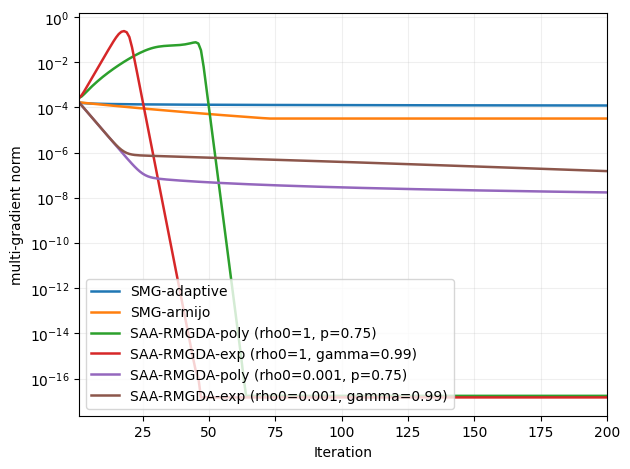}
		\caption{FF1.}
		\label{fig:ff1_norm}
	\end{subfigure}
	\hfill
	\begin{subfigure}{0.32\linewidth}
		\centering
		\includegraphics[width=\linewidth]{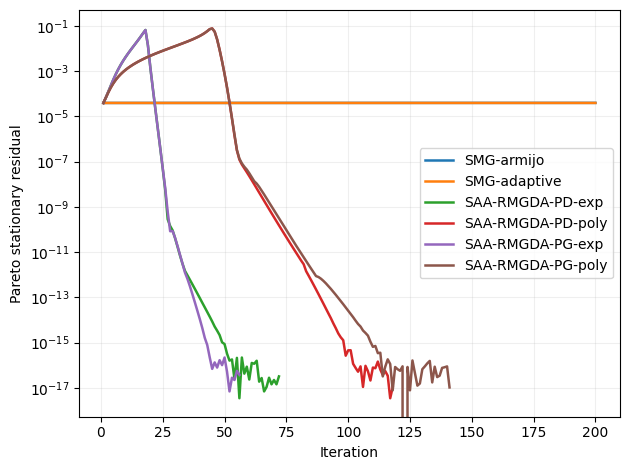}
		\caption{MOP2.}
		\label{fig:mop2_residual}
	\end{subfigure}
	\hfill
	\begin{subfigure}{0.32\linewidth}
		\centering
		\includegraphics[width=\linewidth]{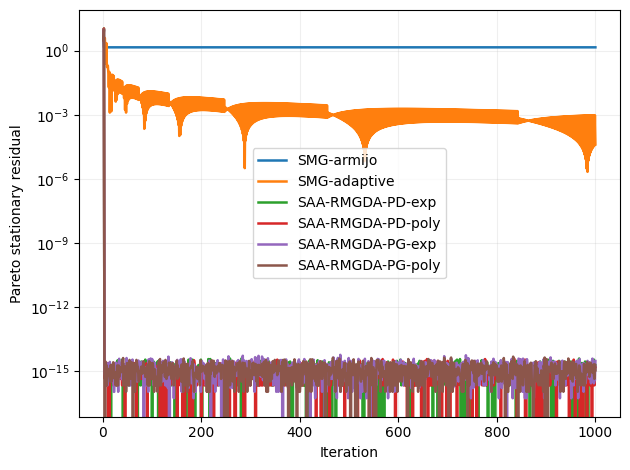}
		\caption{MOP3.}
		\label{fig:mop3_residual}
	\end{subfigure}
	\caption{
		Stationarity surrogate or residual versus iteration number for SMG-adaptive, SMG-Armijo, and SAA-RMGDA on the synthetic stochastic test problems.
		For FF1 we plot the multi-gradient norm $\|\mathbf d_k\|$; for MOP2 and MOP3 we plot the box-constrained Pareto-stationarity residual.}
	\label{fig:synthetic_results}
\end{figure}

\begin{table}[ht]
	\centering
	\begin{tabular}{llccc}
		\hline
		\textbf{problem}
		& \textbf{method}
		& \textbf{time (sec)}
		& \textbf{final $F_1^N$}
		& \textbf{final $F_2^N$} \\
		\hline
		FF1
		& SAA-RMGDA-exp $(\rho_0=10^{-3})$  & 0.1414 & 0.8612  & 0.8609  \\
		& SAA-RMGDA-poly $(\rho_0=10^{-3})$ & 0.1417 & 0.8612  & 0.8610  \\
		& SAA-RMGDA-exp $(\rho_0=1)$        & \textbf{0.1276} & 0.9996  & 0.0261   \\
		& SAA-RMGDA-poly $(\rho_0=1)$       & 0.1530 & 0.9996  & 0.0261   \\
		& SMG-Armijo                          & 0.5523 & 0.8612  & 0.8610  \\
		& SMG-adaptive                        & 0.1793 & 0.8612  & 0.8610  \\
		\hline
		MOP2
		& SAA-RMGDA-PD-exp  & 0.1624 & 0.0085  & 0.9778   \\
		& SAA-RMGDA-PD-poly & 0.4245 & 0.0085  & 0.9778   \\
		& SAA-RMGDA-PG-exp  & \textbf{0.1179} & 0.0085  & 0.9778   \\
		& SAA-RMGDA-PG-poly & 0.5205 & 0.0085  & 0.9778  \\
		& SMG-Armijo        & 0.7627 & 0.9299  & 1.0000   \\
		& SMG-adaptive      & 0.6056 & 0.9301  & 1.0000   \\
		\hline
		MOP3
		& SAA-RMGDA-PD-exp  & 2.2795 & -7.9998  & -38.4056  \\
		& SAA-RMGDA-PD-poly & 1.9805 & -7.4053  & -38.5033  \\
		& SAA-RMGDA-PG-exp  & 2.8344 & -13.6526 & -37.7787 \\
		& SAA-RMGDA-PG-poly & \textbf{1.9413} & -12.7427 & -37.8460  \\
		& SMG-Armijo        & 18.9592 & -43.1496 & -11.3994  \\
		& SMG-adaptive      & 10.6419 & -61.1723 & -6.1232  \\
		\hline
	\end{tabular}
	\caption{
		Comparison of wall-clock time and final objective values for SMG and SAA-RMGDA on the synthetic stochastic test problems.
		For the box-constrained MOP2 and MOP3 problems, all methods have zero feasibility violation up to numerical precision.}
	\label{tab:synthetic_results}
\end{table}

Figure~\ref{fig:synthetic_results} shows lower final stationarity quantities for the SAA-RMGDA variants on all three problems.
On MOP2 and MOP3, these variants drive the constrained residual close to machine precision while retaining feasibility.

Table~\ref{tab:synthetic_results} also shows that the methods can approach different trade-off points.
On FF1, $\rho_0=10^{-3}$ yields objective values close to those of the SMG baselines, whereas $\rho_0=1$ favors the second objective.
All SAA-RMGDA variants reach the same reported point on MOP2 and require less time than the SMG methods.
On MOP3, the methods terminate in distinct trade-off regions, so the timing comparisons should be interpreted together with the final objective values.

\subsection{Binary classification problems}

\label{subsec:classification}



This group of experiments follows the biobjective logistic-regression setting of Liu and Vicente \citep{SMGD-vincente}, where the two objectives are the regularized logistic losses on two subgroups.
Each sample is $\xi_j=(a_j,y_j,g_j)$, where $a_j\in\mathbb R^n$ is the feature vector, $y_j\in\{-1,1\}$ is the label, and $g_j\in\{1,2\}$ is the group indicator.
We use the linear classifier $a\mapsto x^\top a+b$, with
\begin{equation*}
	\mathbf z:=(x^\top,b)^\top\in\mathbb R^{n+1},
	\qquad
	\ell(\mathbf z;a,y)
	:=
	\log\left(1+\exp\bigl(-y(x^\top a+b)\bigr)\right).
\end{equation*}
Let $J_i:=\{j:g_j=i\}$, $i=1,2$.
The SAA objectives are
\begin{equation*}
	F_i^N(\mathbf z)
	=
	\frac{1}{|J_i|}
	\sum_{j\in J_i}
	\ell(\mathbf z;a_j,y_j)
	+
	\frac{\mu_i}{2}\|x\|^2,
	\qquad i=1,2.
\end{equation*}
Here $\mu_i\ge0$ is the regularization parameter for subgroup $i$.
The resulting biobjective problem is
\begin{equation*}
	\min_{\mathbf z\in\mathbb R^{n+1}}
	F^N(\mathbf z),
	\qquad
	F^N(\mathbf z)
	:=
	\begin{pmatrix}
		F_1^N(\mathbf z)\\[1mm]
		F_2^N(\mathbf z)
	\end{pmatrix}.
\end{equation*}
Both objectives are smooth and convex, and they are generally conflicting because they measure prediction losses on different subgroups.

We first test this formulation on the Wisconsin Breast Cancer dataset \citep{dua2017breastcancer}.
We use an 80:20 training--test split and initialize all methods at $\mathbf z_0=0$.
For SAA-RMGDA, we use
$
\rho_0=1,\ \gamma=0.99,\ \beta_\rho=0.75.
$

\begin{figure}[ht]
	\centering
	\includegraphics[width=0.42\linewidth]{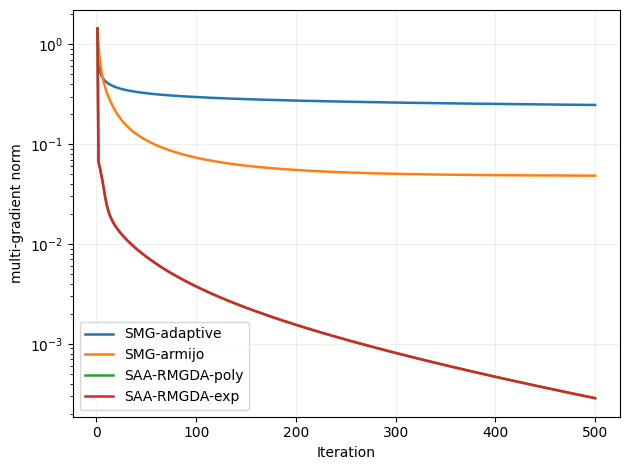}
	\caption{
		Multi-gradient norm $\|\mathbf d_k\|$ versus iteration number for SMG-adaptive, SMG-Armijo, and SAA-RMGDA on the Wisconsin Breast Cancer dataset.}
	\label{fig:bc_norm}
\end{figure}

\begin{table}[ht]
	\centering
	\begin{tabular}{lcccc}
		\hline
		\textbf{method}
		& \textbf{time (sec)}
		& \textbf{train $F_1^N$}
		& \textbf{train $F_2^N$}
		& \textbf{test accuracy} \\
		\hline
		SAA-RMGDA-exp
		& \textbf{1.0481} & \textbf{0.0833} & \textbf{0.1084} & \textbf{0.9739} \\
		SAA-RMGDA-poly
		& 2.1702 & \textbf{0.0833} & \textbf{0.1084} & \textbf{0.9739} \\
		SMG-Armijo
		& 4.4801 & 0.0986 & 0.1305 & 0.9565 \\
		SMG-adaptive
		& 3.9940 & 0.1983 & 0.2315 & 0.9478 \\
		\hline
	\end{tabular}
	\caption{
		Comparison of wall-clock time, training objective values, and test accuracy on the Wisconsin Breast Cancer dataset.}
	\label{tab:bc_table}
\end{table}

Figure~\ref{fig:bc_norm} shows a faster decrease of $\|\mathbf d_k\|$ for the two SAA-RMGDA schedules than for the SMG baselines.
Their final training losses and test accuracies are also better in Table~\ref{tab:bc_table} for this split.

We also test the methods on the four benchmark datasets \texttt{heart}, \texttt{australian}, \texttt{svmguide3}, and \texttt{german.numer} used in \citep{SMGD-vincente}.
For SAA-RMGDA, we use the exponential schedule with $\rho_0=1$ and $\gamma=0.99$.
All methods use the same initialization, training sample size, and iteration budget of $250$ iterations.

\begin{table}[ht]
	\centering
	\begin{tabular}{llcccc}
		\hline
		\textbf{dataset}
		& \textbf{method}
		& \textbf{time (sec)}
		& \textbf{train $F_1^N$}
		& \textbf{train $F_2^N$}
		& \textbf{test accuracy} \\
		\hline
		\texttt{heart}
		& SAA-RMGDA-exp  & \textbf{0.0787} & \textbf{0.3605} & 0.3724 & \textbf{0.8889} \\
		& SMG-Armijo     & 0.6807 & 0.3720 & \textbf{0.3722} & 0.8704 \\
		& SMG-adaptive   & 0.8039 & 0.4724 & 0.4727 & 0.8519 \\
		\hline
		\texttt{australian}
		& SAA-RMGDA-exp  & \textbf{0.0896} & \textbf{0.3513} & \textbf{0.3494} & 0.9065 \\
		& SMG-Armijo     & 2.1593 & 0.3626 & 0.3621 & \textbf{0.9137} \\
		& SMG-adaptive   & 2.2269 & 0.4861 & 0.4856 & 0.8777 \\
		\hline
		\texttt{svmguide3}
		& SAA-RMGDA-exp  & \textbf{0.1213} & \textbf{0.4162} & \textbf{0.4453} & \textbf{0.8120} \\
		& SMG-Armijo     & 1.7005 & 0.4529 & 0.4612 & 0.7880 \\
		& SMG-adaptive   & 0.2974 & 0.5819 & 0.5876 & 0.8000 \\
		\hline
		\texttt{german.numer}
		& SAA-RMGDA-exp  & \textbf{0.0900} & \textbf{0.4751} & \textbf{0.4536} & \textbf{0.7550} \\
		& SMG-Armijo     & 1.3128 & 0.4782 & 0.4780 & 0.7450 \\
		& SMG-adaptive   & 1.3119 & 0.5888 & 0.5886 & 0.7500 \\
		\hline
	\end{tabular}
	\caption{
		Comparison of wall-clock time, training objective values, and test accuracy on four benchmark binary classification datasets.}
	\label{tab:libsvm_logistic}
\end{table}

Table~\ref{tab:libsvm_logistic} reports the smallest running time for SAA-RMGDA on all four datasets.
It also gives the lowest pair of training losses in three cases and competitive test accuracy across the four splits.

\subsection{Simplex-constrained portfolio selection}

\label{subsec:portfolio}

We next consider a stochastic portfolio selection problem with a simplex constraint \citep{markowitz1952portfolio}.
Let $R^j\in\mathbb R^d$, $j=1,\ldots,N$, denote i.i.d. return samples, where $R_i^j$ is the single-period return of asset $i$ in scenario $j$.
The decision variable is the portfolio weight vector $\mathbf z\in\Delta_d$, so that the feasible set represents long-only fully invested portfolios.

For a portfolio $\mathbf z$, the realized return in scenario $j$ is $(R^j)^\top\mathbf z$.
We consider the finite-sample biobjective problem
$\min_{\mathbf z\in\Delta_d} F^N(\mathbf z) := (F_1^N(\mathbf z), F_2^N(\mathbf z))^\top$,
where
\begin{equation*}
	F_1^N(\mathbf z)
	:=
	-\frac1N\sum_{j=1}^N (R^j)^\top\mathbf z,
	\text{ and }
	F_2^N(\mathbf z)
	:=
	\frac1N\sum_{j=1}^N
	\left(
	\operatorname{softplus}_{\tau}\bigl(-(R^j)^\top\mathbf z\bigr)
	\right)^2.
\end{equation*}
Here $F_1^N$ corresponds to maximizing the sample average portfolio return, whereas $F_2^N$ is a smooth downside-risk surrogate.
We use
$
\operatorname{softplus}_{\tau}(t)
:=
\tau\log\left(1+\exp\left(\frac{t}{\tau}\right)\right)
$
with $\tau>0$.
Thus, $\operatorname{softplus}_{\tau}(-(R^j)^\top\mathbf z)$ is a smooth approximation of $\max\{-(R^j)^\top\mathbf z,0\}$.
The objectives express the usual return-risk conflict.
Reducing $F_1^N$ favors a larger average return, whereas reducing $F_2^N$ favors less downside risk.

We generate the return scenarios from a bounded factor model.
For $i=1,\ldots,d$, let
$
s_i:=\frac{i-1}{d-1}.
$
The expected return, idiosyncratic volatility, and factor loading scale of asset $i$ are defined by
$
\mu_i=\mu_{\min}+(\mu_{\max}-\mu_{\min})s_i,
\
\sigma_i=\sigma_{\min}+(\sigma_{\max}-\sigma_{\min})s_i,
$
and
$
\beta_i=\beta_{\min}+(\beta_{\max}-\beta_{\min})s_i.
$
Thus, assets with larger expected returns also have higher volatility.
Let $r$ be the number of common factors.
The factor loading matrix $B\in\mathbb R^{d\times r}$ is defined by
$
B_{i\ell}=\beta_i C_{i\ell},
\
C_{i\ell}\sim
\mathrm{Unif}\left[-1 / {\sqrt r}, 1 / {\sqrt r}\right].
$
For each scenario $j$, we sample
$
u^j\sim \mathrm{Unif}([-1,1]^r),
\
\varepsilon_i^j\sim \mathrm{Unif}([-\sigma_i,\sigma_i]),
\ i=1,\ldots,d,
$
and set
$
R^j=\mu+Bu^j+\varepsilon^j.
$
Since both $u^j$ and $\varepsilon^j$ have bounded support, the generated return vector $R^j$ is bounded.
Therefore, the sample-wise objectives are smooth and bounded on the simplex.

In the experiments, we set $d=20$, $r=3$, and $N=2000$, and use an 80:20 training--test split.
The return-model parameters are
\begin{equation*}
	\mu_{\min}=0.02, \; \mu_{\max}=0.10, \;
	\sigma_{\min}=0.03, \; \sigma_{\max}=0.20, \;
	\beta_{\min}=0.15, \; \beta_{\max}=0.60.
\end{equation*}
For the smooth downside-risk objective, we use $\tau=0.02$.
For SAA-RMGDA, we compare the primal-dual and projected-gradient implementations of the projected regularized multi-gradient step.
For the adaptive regularization schedules, we use $\rho_0=0.1,\ \gamma=0.99,\ \beta_\rho=1.20$.
All methods are initialized from the same feasible point in the simplex and are run for at most $1000$ iterations.
We report the constrained Pareto-stationarity residual, final training objective values, test-set return, test-set downside risk, and wall-clock time.
All methods preserve the simplex constraint up to numerical precision.

\begin{figure}[ht]
	\centering
	\includegraphics[width=0.5\linewidth]{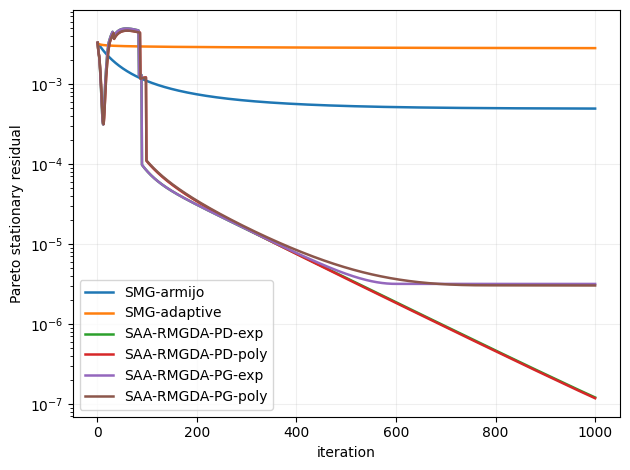}
	\caption{
		Pareto-stationarity residual versus iteration number for SMG-adaptive, SMG-Armijo, and SAA-RMGDA variants on the simplex-constrained portfolio selection problem.}
	\label{fig:portfolio_simplex_residual}
\end{figure}

\begin{table}[ht]
	\centering
	\begin{tabular}{lccccc}
		\hline
		\textbf{method}
		& \textbf{time (sec)}
		& \textbf{final $F_1^N$}
		& \textbf{final $F_2^N$}
		& \textbf{test-set return}
		& \textbf{test-set downside risk} \\
		\hline
		SAA-RMGDA-PD-exp
		& 3.6321 & -0.1044 & 0.0042 & 0.0822 & 0.0058\\
		SAA-RMGDA-PD-poly
		& 15.6006 & -0.1044 & 0.0042 & 0.0822 & 0.0058\\
		SAA-RMGDA-PG-exp
		& \textbf{2.7640} & -0.1044 & 0.0042 & 0.0822 & 0.0058\\
		SAA-RMGDA-PG-poly
		& 5.5887 & -0.1044 & 0.0042 & 0.0822 & 0.0058\\
		SMG-Armijo
		& 4.1733 & -0.0575 & 0.0000 & 0.0585 & 0.0000\\
		SMG-adaptive
		& 3.9854 & -0.0572 & 0.0001 & 0.0611 & 0.0001\\
		\hline
	\end{tabular}
	\caption{
		Comparison of wall-clock time, final objective values, and out-of-sample portfolio performance for SMG and SAA-RMGDA on the simplex-constrained portfolio selection problem.
		All methods have zero feasibility violation up to numerical precision.}
	\label{tab:PM_results}
\end{table}

Figure~\ref{fig:portfolio_simplex_residual} shows lower final residuals for all SAA-RMGDA variants than for the two SMG methods.
The primal-dual curves continue decreasing late in the run, while the SMG curves level off earlier.
Table~\ref{tab:PM_results} shows a common SAA-RMGDA trade-off point with higher test return and higher downside risk than the SMG points.
Among the four variants, the projected-gradient implementation with the exponential schedule has the smallest reported running time.

To further examine the effect of the SAA sample size, we repeat the portfolio experiment with different training sample sizes $N$ and record the held-out expected portfolio return and smooth downside risk in Figure~\ref{fig:portfolio_saa_boxplots}.

\begin{figure}[ht]
	\centering
	\begin{subfigure}{0.4\linewidth}
		\centering
		\includegraphics[width=\linewidth]{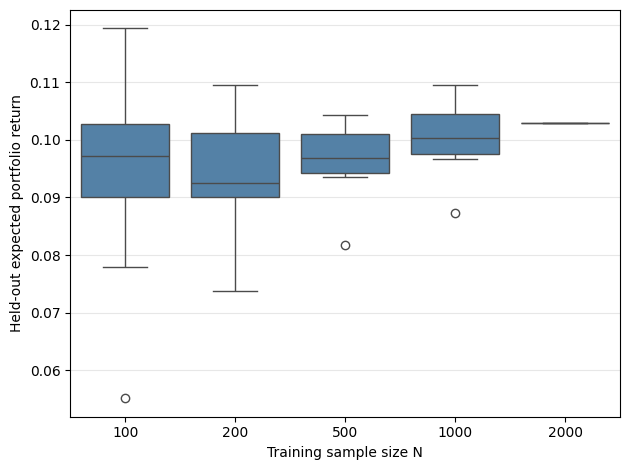}
		\caption{\scriptsize
			Held-out expected portfolio return.}
		\label{fig:portfolio_saa_return}
	\end{subfigure}
	\begin{subfigure}{0.4\linewidth}
		\centering
		\includegraphics[width=\linewidth]{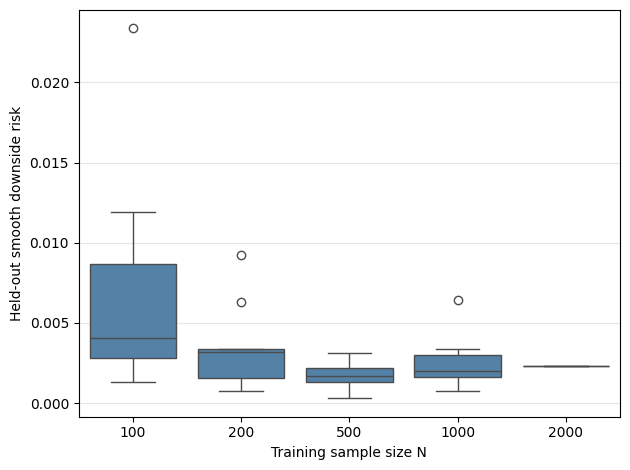}
		\caption{\scriptsize
			Held-out smooth downside risk.}
		\label{fig:portfolio_saa_risk}
	\end{subfigure}
	\caption{
		Held-out performance of SAA-RMGDA on the simplex-constrained portfolio problem for different training sample sizes $N$.}
	\label{fig:portfolio_saa_boxplots}
\end{figure}

As $N$ increases, the held-out return becomes more concentrated.
The downside-risk distribution also shifts downward and narrows.
Together with the zero reported feasibility violation, these results show that the projected method maintains the simplex constraint while exploring return-risk trade-offs.

%

\subsection{Multi-task molecular property prediction}

\label{subsec:molecular}

We next consider a multi-task regression problem on the QM9 molecular property dataset~\citep{ramakrishnan2014quantum}.
QM9 contains small organic molecules with quantum-chemical property labels.
Each molecule is represented as a graph, where nodes encode atom features and edges encode bond information.
We use an NNConv-based graph neural network architecture, following the PyTorch Geometric QM9 example and the multi-task learning setting in \citep{navon2022multitasklearningbargaininggame}.

We formulate the problem as an $11$-objective finite-sum multi-task learning problem. 
The selected QM9 target indices are
$
\mathcal I=\{0,1,2,3,5,6,7,8,9,10,11\}.
$
Since these targets have different physical units and scales, each target is normalized using the mean and standard deviation computed from the full training split.
The same normalization is then applied to the validation and test splits.

Let $\mathcal G_\ell$ denote the graph representation of molecule $\ell$, and let $\widetilde y_\ell\in\mathbb R^{11}$ be the corresponding normalized target vector.
The graph neural network with parameter vector $\theta$ is denoted by
$
h_\theta(\mathcal G_\ell)
=
\bigl(
h_{\theta,1}(\mathcal G_\ell),\ldots,
h_{\theta,11}(\mathcal G_\ell)
\bigr)^\top
\in\mathbb R^{11}.
$
We define the SAA objective vector
\begin{equation*}
	F^N(\theta)
	=
	\bigl(F_1^N(\theta),\ldots,F_{11}^N(\theta)\bigr)^\top,
\end{equation*}
where
$
F_i^N(\theta)
=
\frac1N
\sum_{\ell\in S_N}
\frac12
\left(
h_{\theta,i}(\mathcal G_\ell)-\widetilde y_{\ell,i}
\right)^2,
\ i=1,\ldots,11.
$
Thus, each objective measures the normalized squared prediction error for one molecular property.

We use Softplus activations so that the objective function is smooth.
With fixed graph construction and edge features, affine target normalization, and squared losses, every component $F_i^N$ is differentiable with respect to the trainable parameters.

The dataset is deterministically split into $110{,}000$ training molecules, $10{,}000$ validation molecules, and $10{,}000$ test molecules. In the main QM9 experiment, we set $N=512$, and test on the out-of-sample $10000$ molecules.

We compare the polynomial and exponential SAA-RMGDA variants with two SMG-type baselines that use adaptive and Armijo stepsizes.
For SAA-RMGDA, the regularization parameter is chosen as
$
\rho_0=10^{-2},\ \beta_\rho=0.5,\ \gamma=0.999.
$
All methods are initialized from the same network parameters for each random seed and use the same fixed SAA subset.
We report the Pareto-stationarity measure versus the iteration number, the test normalized mean squared error, and the per-task test mean absolute errors in the original physical units.

\begin{figure}[ht]
	\centering
	\includegraphics[width=0.5\linewidth]{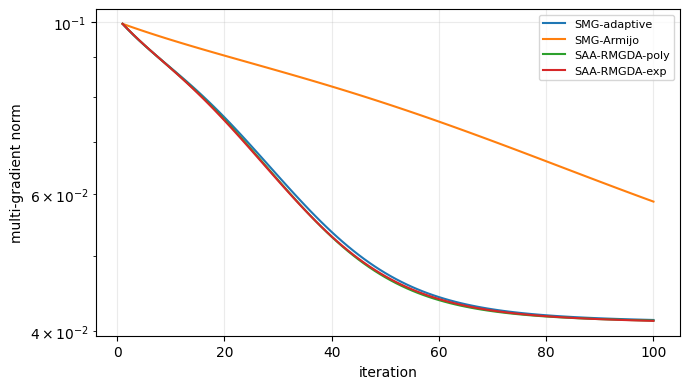}
	\caption{
		Multi-gradient norm versus iteration number for SMG-adaptive, SMG-Armijo, and SAA-RMGDA on the QM9 multi-task molecular property prediction problem.}
	\label{fig:qm9_chi}
\end{figure}

\begin{table}[ht]
	\centering
	\begin{tabular}{lccccc}
		\hline
		\textbf{method}
		& \textbf{time (hr)} 
		& \textbf{train mean}
		& \textbf{train max}
		& \textbf{train N-MSE}
		& \textbf{test N-MSE}
		\\
		\hline
		SAA-RMGDA-exp
		& \textbf{1.92} & \textbf{0.4971} & \textbf{0.6149}
		& \textbf{0.9941} & \textbf{1.0384} \\
		SAA-RMGDA-poly
		& 2.15 & 0.4975 & 0.6156
		& 0.9951 & 1.0390 \\
		SMG-Armijo
		& 2.20 & 0.4981 & 0.6162
		& 0.9962 & 1.0396  \\
		SMG-adaptive
		& 2.07 & 0.4977 & 0.6158
		& 0.9953 & 1.0391  \\
		\hline
	\end{tabular}
	\caption{
		Summary of final optimization and test metrics on the QM9 multi-task molecular property prediction problem.
		Here ``train mean'' and ``train max'' denote the mean and maximum component values of the final SAA objective vector, and N-MSE denotes normalized mean squared error.}
	\label{tab:qm9_summary}
\end{table}

\begin{table}[ht]
	\centering
	\begin{tabular}{lcccc}
		\hline
		\textbf{task}
		& \textbf{SAA-RMGDA-exp}
		& \textbf{SAA-RMGDA-poly}
		& \textbf{SMG-Armijo}
		& \textbf{SMG-adaptive}
		\\
		\hline
		0 & \textbf{7.2374} & 7.2418 & 7.2457 & 7.2430  \\
		1 & \textbf{3.1998} & 3.2010 & 3.2014 & 3.2013  \\
		2 & 834.9663 & \textbf{834.9524} & 835.0157 & 834.9728  \\
		3 & \textbf{839.3074} & 839.7780 & 840.1189 & 839.9038  \\
		5 & \textbf{0.4456} & 0.4459 & 0.4462 & 0.4460  \\
		6 & 1.0741 & \textbf{1.0741} & 1.0747 & 1.0741  \\
		7 & 1.1793 & 1.1793 &\textbf{1.1762}& 1.1793   \\
		8 & 198.6841 & 198.6438 & \textbf{198.6148} & 198.6318  \\
		9 & \textbf{842.8548} & 843.1283 & 843.4073 & 843.2068  \\
		10 & \textbf{870.0687} & 870.5227 & 870.9440 & 870.5985  \\
		11 & \textbf{0.7361} & 0.7366 & 0.7371 & 0.7368  \\
		\hline
	\end{tabular}
	\caption{
		Per-task test mean absolute errors on QM9 in the original physical units.}
	\label{tab:qm9_task_mae}
\end{table}

Figure~\ref{fig:qm9_chi} shows a steady decrease of the multi-gradient norm for both SAA-RMGDA schedules.
Their final values are close to those of SMG-adaptive, while SMG-Armijo decreases more slowly over the displayed iterations.

Table~\ref{tab:qm9_summary} gives small but consistent advantages to SAA-RMGDA-exp in the mean and maximum SAA objectives, normalized training and test errors, and running time.
The per-task results in Table~\ref{tab:qm9_task_mae} are close across methods.
SAA-RMGDA-exp is best on seven of the eleven listed tasks, while the remaining tasks favor one of the other variants by modest margins.

\subsection{Continuous robot control}

\label{subsec:robot}

We consider a continuous-control problem motivated by the speed-energy trade-off in robotic locomotion~\citep{xu2020prediction}. 
The purpose of this experiment is to evaluate the proposed stochastic multi-objective optimization methods on a robot sequential-control problem in which the decision variable is a neural-network policy and objectives depend on the entire simulated trajectory. 
We use the \texttt{HalfCheetah} model in MuJoCo~\citep{todorov2012mujoco}, simulated using MuJoCo MJX~\citep{mujoco_software}.

Let $s_t\in\mathbb R^{d_s}$ and $a_t\in\mathbb R^{d_a}$ denote the state and action at time $t$, respectively. 
The control policy is parameterized by
\(\theta\in\mathbb R^p\) and is defined by
\[
a_t
=
\pi_\theta(s_t,\varepsilon_t)
:=
\tanh\!\left(
\mu_\theta(s_t)+\sigma\varepsilon_t
\right),
\]
where
\(\mu_\theta:\mathbb R^{d_s}\to\mathbb R^{d_a}\)
is a feedforward neural network with smooth activation functions,
\(\varepsilon_t\sim\mathcal N(0,I_{d_a})\), and \(\sigma>0\) is fixed.
In our implementation, we have \(d_s=17\), \(d_a=6\), and \(\sigma=0.03\). 
Moreover, \(\mu_\theta\) is the network with two hidden layers of width \(16\), where \(\theta\) with dimension \(p=662\) contains all trainable weights and biases of the policy network.

The simulator dynamics are defined by
\[
s_{t+1}=f_{\mathrm{MJX}}(s_t,a_t),
\]
where \(f_{\mathrm{MJX}}\) denotes one control step of the fixed MuJoCo rigid-body system, including joint and actuator dynamics, contact dynamics, and numerical integration.
For each simulation
scenario \(j\), let
\[
\xi_j
=
\left(
s_0^j,
\varepsilon_0^j,\ldots,\varepsilon_{T-1}^j
\right)
\in\mathbb R^{d_s+Td_a}, \text{ where } T=100.
\]
Given \((\theta,\xi_j)\), the policy and simulator generate the trajectory
\[
\tau_\theta(\xi_j)
=
(s_0^j,a_0^j,s_1^j,a_1^j,\ldots,s_T^j).
\]

We then consider the SAA biobjective problem
\[
\min_{\theta\in\mathbb R^p}
F^N(\theta)
:=
\begin{pmatrix}
	F_1^N(\theta)\\[1mm]
	F_2^N(\theta)
\end{pmatrix}.
\]
Let \(x_t(\theta;\xi_j)=[s_t(\theta;\xi_j)]_r\in\mathbb R\) denote the forward position of the
robot at time \(t\) in scenario \(\xi_j\). The corresponding forward
velocity is
\[
v_t(\theta;\xi_j)
:=
\frac{x_{t+1}(\theta;\xi_j)-x_t(\theta;\xi_j)}{\Delta t},
\]
where \(\Delta t=0.05>0\) is the simulation time step. We define \[
F_1^N(\theta)
:=
-\frac1N
\sum_{j=1}^N
\sum_{t=0}^{T-1}
\omega^t
v_t(\theta;\xi_j),
\]
and
\[
F_2^N(\theta)
:=
\frac1N
\sum_{j=1}^N
\sum_{t=0}^{T-1}
\omega^t
\|a_t(\theta;\xi_j)\|^2,
\]
where \(\omega\in(0,1)\) is the discount factor. 
In the experiments, we choose \(\omega=0.99\). 
Thus, minimizing \(F_1^N\) favors larger forward velocity, whereas
minimizing \(F_2^N\) favors lower control energy. For SAA-RMGDA, the regularization parameter is chosen as
$
\rho_0=0.03,\ \beta_\rho=0.5.
$
All methods use the same initialization and the same fixed SAA training sample with
\(
N=64\)
within each replication.
We perform five independent replications. For each replication, an independent set of \(512\) held-out scenarios is used for out-of-sample evaluation and is shared by all competing methods.

We report the final training objectives \(F_1^N\) and \(F_2^N\), wall-clock time, test speed, and test control effort as mean \(\pm\) standard deviation over the five replications. The evolution of the multi-gradient norm \(\|\mathbf d_k\|
\) is illustrated using one representative replication.

\begin{figure}[ht]
	\centering
	\includegraphics[width=0.5\linewidth]{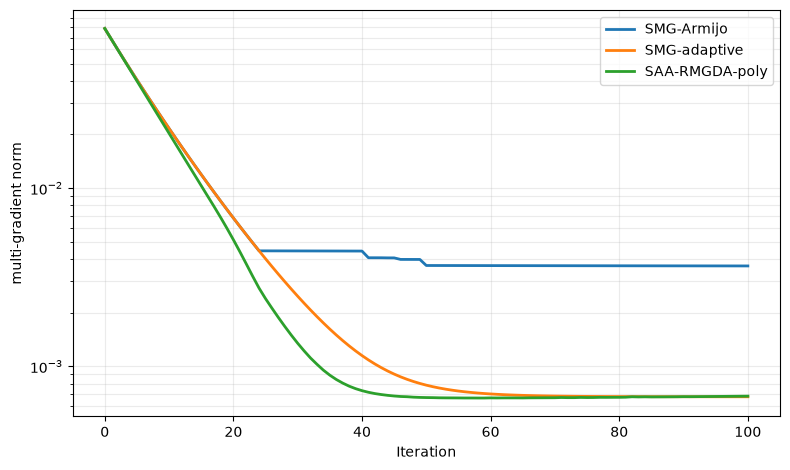}
	\caption{
		Multi-gradient norm versus iteration number for  SMG-Armijo, SMG-adaptive, and SAA-RMGDA-poly for one representative replication of
		the HalfCheetah continuous-control experiment.}
	\label{fig:morl_chi}
\end{figure}

\begin{table}[ht]
	\centering
	\begin{tabular}{lccc}
		\hline
		\textbf{method} & SAA-RMGDA-poly & SMG-Armijo & SMG-adaptive \\
		\hline
		\textbf{time (sec)} 
		& $\mathbf{7742.0 \pm 1.1\times10^3}$ 
		& $13591.9 \pm 3.2\times10^3$ 
		& $8472.0 \pm 1.9\times10^3$ \\
		
		\textbf{train $F_1^N$} 
		& $\mathbf{-3.1065 \pm 0.3385}$ 
		& $-3.1057 \pm 0.3383$ 
		& $-3.1061 \pm 0.3385$ \\
		
		\textbf{train $F_2^N$} 
		& $0.3426 \pm 0.0024$ 
		& $0.3426 \pm 0.0024$ 
		& $0.3426 \pm 0.0024$ \\
		
		\textbf{test forward velocity}
		& $\mathbf{3.0339 \pm 0.0061}$ 
		& $3.0336 \pm 0.0057$ 
		& $3.0338 \pm 0.0062$ \\
		
		\textbf{test energy}
		& $0.3416 \pm 0.0007$ 
		& $0.3416 \pm 0.0007$ 
		& $0.3416 \pm 0.0007$ \\
		\hline
	\end{tabular}
	\caption{
		Mean and standard deviation over five independent replications for the
		HalfCheetah continuous-control experiment.}
	\label{tab:morl_results}
\end{table}

Figure~\ref{fig:morl_chi} illustrates the convergence behavior for one
representative replication. SAA-RMGDA-poly and SMG-adaptive reduce the
multi-gradient norm to a lower level than SMG-Armijo, with
SAA-RMGDA-poly reaching this level slightly faster.

Table~\ref{tab:morl_results} reports the mean and standard deviation over
five independent replications. The three methods attain very similar
training and out-of-sample performance. SAA-RMGDA-poly gives the
largest mean test forward velocity and the smallest mean wall-clock
time, while the test energy values are nearly identical across the
three methods.

\section{Conclusion}

\label{sec:conclusion}

We develop a function-value-free adaptive projected-gradient method for sample-average approximations of constrained stochastic multi-objective problems. 
The regularized subproblem produces feasible update steps and a unique simplex multiplier. 
An adaptive scheme for updating regularization parameters removes the need for line-search procedures. 
Our theoretical analysis gives an $\mathcal O(\varepsilon^{-2})$ iteration complexity for the SAA Pareto-stationarity residual. 
We also prove the consistency of SAA with population Pareto stationarity and a finite-sample residual bound on compact sets. 
The experiments illustrate these properties across several problem classes and also show that the regularization schedule can influence the trade-off point reached by the method. 
Future work may combine our algorithm with increasing or adaptive sample sizes, inexact subproblem stopping rules, and mechanisms for generating a broader approximation of the set of Pareto stationary points or a subset of weakly Pareto optimal solutions. 
Extending the proposed framework to a distributionally robust setting for handling distribution shifts \cite{shapiro2021lectures,wang2025distributionally} would also be an interesting direction.

\bibliographystyle{apalike-arxiv-numeric}
\bibliography{references}
\end{document}